\newtheorem{theorem}{Theorem}
\newtheorem{corollary}[theorem]{Corollary}
\newtheorem{definition}[theorem]{Definition}
\newtheorem{lemma}[theorem]{Lemma}
\newtheorem{remark}[theorem]{Remark}
\def\uno{\mathbbm{1}}
\def\cA{\mathcal A}
\def\cB{\mathcal B}
\def\cE{\mathcal E}
\def\cF{\mathcal F}
\def\cQ{\mathcal Q}
\def\cT{\mathcal T}
\def\cW{\mathcal W}
\def\cU{\mathcal U}
\def\cX{\mathcal X}
\def\cY{\mathcal Y}
\def\cZ{\mathcal Z}
\def\bE{\mathbb E}
\def\bN{\mathbb N}
\def\bP{\mathbb P}
\def\bR{\mathbb R}
\def\me{\mathsf{e}}
\def\mE{\mathsf{E}}
\def\mG{\mathsf{G}}
\def\mv{\mathsf{v}}
\def\mx{\mathsf{x}}
\def\mz{\mathsf{z}}%
\title{Sticky Brownian motions on star graphs
}
\author{
Stefano Bonaccorsi \\
  Department of Mathematics \\
  Universit\`a degli Studi di Trento \\
  Italy \\
  \texttt{stefano.bonaccorsi@unitn.it} \\
   \And
  Mirko D'Ovidio \\
  Department of Basic and Applied Sciences for Engineering \\
  Sapienza University of Rome \\
  Italy\\
  \texttt{mirko.dovidio@uniroma1.it} \\
}
\begin{document}
\maketitle

\begin{abstract}
This paper is concerned with the construction of several stochastic processes in a star graph, that is a non-euclidean structure where some features of the classical modelling fail.
We propose a model for trapping phenomena with characterization of the traps in terms of a singular measure. This measure also defines a non-local operator by means of which we introduce a non-local dynamic condition for the parabolic problem on the star graph. We study semi-Markov processes on the rays of the graph in order to obtain a probabilistic representation of the motion on the whole graph. Extensions to general graph structures can be given by applying our results on star graphs.
\end{abstract}

\keywords{Traps \and Graphs \and Brownian excursions \and Non-local operators \and Dynamic boundary conditions.}

\tableofcontents

\section{Introduction}

The problem of a complete characterisation and construction of all possible Brownian motions on intervals was posed  by Feller
\cite{Feller1952, Feller1954a, Feller1954b} and later solved by It\^o and McKean \cite{Ito1963, Ito1974}.
Their solution is based on the theory of the local time for a Brownian motion and the theory of strong Markov processes.
More recently, several papers have addressed the question of what is a Brownian motion on a  graph.
In this paper, we continue in this line of research and we address the \emph{sticky} Brownian motion.
We shall work in the special case of a star graph, that is, the union of a finite number of copies of the half-line joined at the origin.
In graph terminology, we say that the star graph has a unique vertex (the origin) and $n$ edges of infinite length.
We shall formalise this construction in next subsection.
\\
The processes constructed on this space shares a lot of similarities with their correspondent one dimensional relatives.
For an example we discuss the elliptic problems associated with the infinitesimal generators of the processes (see also \cite{Freidlin1993, Freidlin2000} for related results).
In mathematical physics, the Laplace's problem $\Delta u = 0$ 
and the Poisson's problem $\Delta u = -g$ are related to the analysis of
steady states (for the first equation) and conservative fields (the latter).
Solutions to Laplace's problem are the {\em harmonic} functions on a domain with prescribed boundary conditions.
There is an important probabilistic representation for the solutions of both the Laplace's problem and the Poisson's problem, which we shall recover in our setting, see Theorems \ref{th.D} and \ref{th.P}.

Our choice of working in a star graph is not restrictive and can be justified as follows.
\\
Let $\mG$ be a generic graph with  finite numbers of edges and vertices.
In order to define a Brownian motion on $\mG$, we can proceed by defining the infinitesimal generator $A$ of the process, compare Theorem \ref{th:Kost},
by fixing boundary conditions in every vertex of the graph.
The resulting process is a Brownian motion, possibly with several different kinds of node conditions.
\\
Let us define for each vertex $\mv \in \mG$ the subgraph $\mG_\mv$ of all edges starting from $\mv$. This is further contained in the star graph with center $\mv$ having the same number of rays as the edges incident to $\mv$.
\\
A second approach to define the process on the whole graph is obtained by considering, first,
a family of  Brownian motions on the star graphs $\mG_\mv$, indexed by the set of vertices $\mv$; 
if the Brownian motion on the graph starts in a certain vertex $\mv$, it behaves like the Brownian motion defined on the star graph $\mG_\mv$ until the first time it reaches a different vertex.
From this moment, the behaviour of the Brownian motion on the graph coincides with that of the Brownian motion associated to this new vertex, again stopped at the first time of entering the set of neighbor vertices.
Compare for instance with \cite{Fitzsimmons2015}, where the authors introduce an embedded Markov chain to keep track
of the sequence of vertices in the graph visited by the Brownian motion.

Our results can be considered in many fields such as communications, social sciences, biology and others. 
Let us recall the trapping problems (see for example \cite{HollWeis}) and the well-known Bouchaud trap models (\cite{Bou92}). 
There is a vast literature on trapping problems for example in case of regular lattice and fractal structures. They involve a number of traps located in random locations, in these traps we have absorption. The Bouchaud trap models can be considered as reference models for trapping phenomena. They have the same scaling limit as a continuous time random walk leading to the fractional-kinetic equation and also in this case, there exists a consistent literature on the long-time behaviour of this models. In the present paper, we focus on sticky behaviours (including the absorption) realized through non-local dynamic conditions (including the so-called fractional kinetic equation). The non-local effects act independently on the motion determining holding times with infinite mean values, i.e., the process spends on the average an infinite amount of time  in the vertex of the star graph.

\vskip 1\baselineskip

Let us briefly sketch the structure of this work. In the remaining of this section, we provide all the main definitions and notation needed in the sequel, as well as a summary of our main results.
Section \ref{sez2} is devoted to present some background material, which extends to our framework known results about the construction of the Brownian motion on a star graph.
In Section \ref{sez:sticky} we define the sticky Brownian motion and we study its properties, with particular emphasis on its infinitesimal generator.
Section \ref{sez4} is devoted to the construction and analysis of the sticky Brownian motion with trapping star vertex.
In this case the associated infinitesimal generator is a nonlocal operator described through a fractional dynamic boundary condition.

\subsection{The star graph}

Let us consider a family of copies of the positive half lines $\cE = \{\me_j = [0, \infty),\ j = 1, \dots, n\}$.
Each point in $\cE$ is denoted formally by the couple $(j, x)$, where $j$ is the relevant ray considered and $x$ is the distance from the origin.

According to \cite{Mugnolo2019}, we introduce the equivalence relation on $\cE$ 
\begin{align*}
(j, x) \sim (k, y) \quad \Longleftrightarrow \quad \begin{cases} j = k\ \text{and}\ x = y \\ x = y = 0,\ \text {any}\ j, k. \end{cases}
\end{align*}

We define the \emph{star graph} as the quotient space $\mE = \cE/\sim$, i.e., we identify the starting points on all edges and
in $\mE$ the origin $0 \equiv (\cdot, 0)$ is the unique point that belongs to all the rays.

On every edge, we have an Euclidean structure given by the Euclidean distance, and a measure structure induced by the Lebesgue measure.
These structures are inherited by
the space $\mE$: it is a metric space with the distance
\begin{align*}
d((j,x), (k,y)) = |x - y| \uno_{j=k} + (x + y) \uno_{j \not= k}
\end{align*}
and a measure space with respect to the direct sum measure induced by the Lebesgue 
measure on every edge.

In particular, this metric-measure structure allows us to consider spaces of functions defined on the star graph $\mE$ based on topological and measure-theoretical notions:
in particular, we introduce the space $C_0(\mE)$
 of continuous functions $f \colon \mE \to \bR$ that vanish at infinity, equipped with the sup norm; and the Lebesgue spaces $L^p(\mE)$ with respect to the
Lebesgue measure.

Let $f \colon \mE \to \bR$.
As a shortcut we let $f_j(x) = f(j,x)$ for $j = 1, \dots, n$ and $x > 0$ and we define 
\begin{align}
f_j'(0) = \lim_{r \to 0} \frac{\partial}{\partial r} f(j,r).
\end{align}
Similarly, we let 
\begin{align*}
f_j''(x) =  \frac{\partial^2}{\partial x^2} f(j,x).
\end{align*}
We define $C^2_0(\mE)$ the space of functions in $C_0(\mE)$ that are twice continuously differentiable on each open ray $\mathring\me_j = (0,\infty)$
such that there exists finite the limit
\begin{align*}
f''(0) = \lim_{x \to 0} f_j''(x), \qquad j=1, \dots, n.
\end{align*}
Informally, we shall say that the second derivative $f''$ can be extended to a function in $C_0(\mE)$.

It shall be evident from the above that any random variable (and, therefore, stochastic process) $\cX$ with values in $\mE \setminus \{0\}$ is identified by two components, the \emph{spherical} component $\Theta$ which takes values in $\{1, \dots, n\}$, and the \emph{radial} component $X$, $X > 0$.
However, if $\cX = 0$, in order to uniquely define the spherical component, we impose $\Theta = 1$ and $X = 0$.

\subsection{Feller's Brownian motion on a star graph}

We adapt the following definition from \cite{Kostrykin2012a, Kostrykin2012b} to our setting.

\begin{definition}
\label{de:BM-Kost}
A Brownian motion $\cZ = \{\cZ_t,\ t \in [0,\infty)\}$ on $\mE$ is a diffusion process on $\mE$, such that {the radial component} $Z$ with absorption at 0 is equivalent to a Brownian motion on the half line $\bR_+$ with absorption at the origin.
\end{definition}

A diffusion process is a strong Markov process with continuous trajectories on $[0,\zeta)$, where $\zeta$ is its lifetime.
In \cite{Kostrykin2012a} the following characterization of a Brownian motion is stated.

\begin{theorem}
\label{th:Kost}
Assume that $\cZ$ is a Brownian motion on $\mE$ as defined in Definition \ref{de:BM-Kost}.
Then there exist constants $a, b, \{p_k\}, c \in [0,1]$, where $k=1, \dots, d$, with
\begin{align*}
\sum_{k=1}^n p_k = 1, \qquad a + b + c = 1, \qquad a \not= 1,
\end{align*}
such that the domain $D(A)$ of the generator $A$ of $\cZ$ in $C_0(\mE)$ consists exactly of those $f \in C^2_0(\mE)$ that satisfies
\begin{align}
\label{eq:gen-bc-bm}
a f(0) + \frac12 c f''(0) = b \, \sum_{k=1}^n p_k f_k'(0).
\end{align}
Moreover, for $f \in D(A)$,
\begin{align*}
A f({\color{black}j,}z) = \frac12 \frac{\partial^2}{\partial z^2}f({\color{black}j,}z).
\end{align*}
\end{theorem}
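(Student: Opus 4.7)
The plan is to reduce the claim to Feller's classification of one-dimensional diffusion boundary conditions at the origin, with the rays glued together through an excursion-theoretic description of the exit from the vertex.

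First, I would show that away from $0$ the generator acts as the one-dimensional Laplacian. Fix an interior point $(j,z)$ with $z>0$, let $\tau$ be the first time $\cZ$ reaches $0$, and observe that by Definition \ref{de:BM-Kost} the process $\cZ_{t\wedge\tau}$ started at $(j,z)$ is equivalent, through the radial representation, to a Brownian motion on $\bR_+$ absorbed at $0$. Hence for $f\in D(A)$ the function $(j,z)\mapsto f(j,z)$ must be $C^2$ on the open ray $\mathring{\me}_j$ and $Af(j,z)=\frac12\partial^2_z f(j,z)$ there, by the classical calculation of the generator of absorbed Brownian motion. This already forces $D(A)\subset C^2_0(\mE)$ and the form of $Af$ on each open ray; the continuity of $Af$ at $0$ delivers the limit condition defining $C^2_0(\mE)$.

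Second, I would identify the boundary relation at $0$. The key ingredient is the excursion structure of $\cZ$ away from the vertex. Because $\cZ$ is a strong Markov process with continuous paths and the radial part is Brownian motion absorbed at $0$, the exit from $\{0\}$ admits an It\^o excursion measure $\mathbf n$ on paths in $\mE\setminus\{0\}$. Since the radial projection of $\mathbf n$ must coincide with the standard Brownian excursion measure on $\bR_+$ (up to a multiplicative constant absorbed into the local-time normalisation), the ray label $\Theta$ of an excursion is independent of its radial trajectory and distributed according to some probabilities $p_1,\ldots,p_n$ with $\sum_k p_k=1$. Applying Dynkin's formula to $f\in D(A)$ between consecutive excursions and taking $f=\uno_{\{0\}}$-type test expressions, one identifies the infinitesimal balance at the vertex: the generator has to satisfy a relation of the form
\begin{align*}
a\,f(0)+\tfrac12 c\,f''(0)=b\sum_{k=1}^n p_k f_k'(0),
\end{align*}
with $a,b,c\ge 0$, $a+b+c=1$. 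The coefficient $a$ measures pure absorption (killing at $0$), $c$ measures stickiness (time spent at the vertex), and $b$ measures instantaneous departure, distributed across the rays by the weights $p_k$. The condition $a\ne 1$ excludes the degenerate case of pure killing with no exit, under which there is no nontrivial diffusion.

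Third, I would verify the converse: every choice of parameters $(a,b,c,p_1,\ldots,p_n)$ with the prescribed constraints yields, via Hille–Yosida, a Feller generator on $C_0(\mE)$ whose associated process is a Brownian motion in the sense of Definition \ref{de:BM-Kost}. This amounts to constructing the resolvent $R_\lambda$ by solving $(\lambda-\frac12\partial^2_z)u_\lambda=g$ on each ray with the exponential ansatz, then determining the $n+1$ free constants through the $n$ continuity conditions at $0$ and the one boundary relation above. Solvability of the resulting linear system, and positivity/contractivity of $R_\lambda$, follow from the sign conditions on $a,b,c,p_k$.

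The main obstacle I anticipate is the rigorous identification of the boundary form, i.e.\ showing that the excursion law of $\cZ$ truly decomposes into independent ray label and radial excursion and that the constants $a,b,c$ arise exactly as above. This is precisely the step where Feller's one-dimensional classification must be transferred to the star-graph setting; the combinatorics of the $n$ rays is handled cleanly through the $p_k$, but the analytic verification that no additional boundary terms appear (e.g. interaction between rays through the vertex) requires careful use of the strong Markov property at $\tau$ together with Blumenthal's $0$–$1$ law for excursions starting at $0$.
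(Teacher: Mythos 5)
First, a point of reference: the paper does not prove Theorem \ref{th:Kost} at all --- it is imported from \cite{Kostrykin2012a}, and the only related in-paper computation (Section 2.4) is a formal Taylor-expansion argument for the generator at the vertex in the special case $a=c=0$. So your proposal must be measured against the literature proof rather than against anything in the text. Your step 1 (interior identification of $A$ as $\tfrac12\partial_z^2$ and $D(A)\subset C^2_0(\mE)$) and step 3 (the converse via an explicit exponential resolvent ansatz and Hille--Yosida) are sound and standard.

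The genuine gap is exactly where you place it, in step 2, and the device you propose cannot close it. ``Dynkin's formula \dots{} taking $f=\uno_{\{0\}}$-type test expressions'' is not workable: such functions are neither in $C_0(\mE)$ nor in $D(A)$, and Dynkin's formula applied to genuine elements of $D(A)$ between excursions only reproduces the martingale property, not a boundary identity. More substantively, the excursion decomposition you describe can at best produce the weights $p_k$ and the reflection coefficient $b$: the coefficients $a$ (killing at the vertex) and $c$ (stickiness, i.e.\ positive Lebesgue time spent at $0$) are not visible in the excursion measure itself but are encoded in the exit system (killing rate and stagnancy rate per unit local time), and extracting them rigorously is essentially the whole content of the It\^o--McKean synthesis; you would also need to rule out a non-local term $\int (f(x)-f(0))\,\nu({\rm d}x)$ in the boundary relation, which requires showing that path continuity forces every excursion's entrance law to concentrate at $0$. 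The standard rigorous route (Feller, It\^o--McKean, \cite{Kostrykin2012a}) is analytic instead: one shows that the boundary-data map $f\mapsto (f(0), f_1'(0),\dots,f_n'(0), f''(0))$ sends $D(A)$ onto a hyperplane of $\bR^{n+2}$ (a codimension-one count obtained from the resolvent equation $\lambda u-\tfrac12 u''=g$ on each ray together with the $n$ exponential homogeneous solutions $e^{-\sqrt{2\lambda}\,x}$), so that $D(A)$ is cut out by a single linear relation among these data; the positive maximum principle then forces $a,c\ge 0$ and $b\,p_k\ge 0$, normalisation gives $a+b+c=1$ and $\sum_k p_k=1$, and $a=1$ is excluded because the resulting domain $\{f: f(0)=0\}$ would not be dense in $C_0(\mE)$. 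Either carry out the full exit-system analysis in your step 2, or replace it by this analytic argument.
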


\begin{definition}
\label{def-Z}
A \emph{standard} Brownian motion $\cZ$ on $\mE$ is a Feller's Brownian motion with domain determined by $a = c = 0$, $b = 1$.
The values $\{p_k\}$ represent the probability of finding the Brownian motion on each of the edges $\me_k$.
\end{definition}

As occurs in the case of a real standard Brownian motion, giving the definition is not sufficient in order to show the existence of such a process.
The construction of this process will be provided in Section \ref{sez:constr}.

\subsection{Sticky Brownian motion}

According to Theorem \ref{th:Kost}, the definition of a \emph{sticky} Brownian motion can be given in terms of the parameters $a$, $b$, and $c$ in formula \eqref{eq:gen-bc-bm}.
Notice that actually there exists a family of sticky Brownian motions, depending on a parameter $\mu = \frac{b}{c} \in (0, \infty)$.

\begin{definition}
\label{de:SBM}
We say that a stochastic process $\cX$ is a $\mu$-sticky Brownian motion on a graph $\mE$ if it is a Brownian motion in the sense of Definition \ref{de:BM-Kost} and it satisfies $a = 0$, $b + c = 1$, $b = \mu c$, such that the domain of the infinitesimal generator $A$ of the process is given by \eqref{eq:gen-bc-bm}.
\end{definition}

The construction of a sticky Brownian motion on $\mE$ will be given in Section \ref{sez:sticky}. 
The idea, quite classical in this regards, is to define the process $\cX(t)$ via a suitable time change of the standard Brownian motion $\cZ(t)$, time change which employs the local time of $\cZ$.

Given a ball $B_r(0) = \{ \mx \in \mE\ :\ |x| < r\}$,
in Section \ref{sez.D}
we consider the problem of finding the distribution of place and time of exit from the ball.
It is possible to connect this problem with the analysis of harmonic functions on the graph (see Theorem \ref{th.D}) and the study of functions with prescribed second derivative (see Theorem \ref{th.P}).

Once we provide the existence of a sticky Brownian motion, 
this construction opens the way to define a further family of processes, by taking a further modification of the time change through a subordinator process as given in formula 
\eqref{eq:timechange-SBM} below.
Time changes  induce transformations of the speed of the motion of the process, thus leading to more complex and interesting dynamics.
A subordinator $H = \{H_t,\ t \ge 0\}$ is a non-decreasing L\'evy process of pure jump type, that is a process with stationary and independent increments, with trajectories that are c\`adl\`ag. The Laplace transform of $H_t$ is given by
\begin{align*}
\bE[e^{-s H_t}] = e^{-t \Phi(s)},
\end{align*}
where $\Phi(s)$ is a Bernstein function, i.e., there exist $\lambda > 0$ and a L\'evy measure $\phi$ on $\cB(\bR_+)$ such that
\begin{align*}
&\Phi(s) = \lambda s + \int_{(0,\infty)} ( 1 - e^{-\kappa s} ) \, \phi({\rm d}\kappa),
\\
&\int_{(0,\infty)} \min\{1, \kappa\} \, \phi({\rm d}\kappa) < \infty.
\end{align*}
We will assume that
\begin{equation}
t \mapsto H_t \text{ is strictly increasing, a.s.}
\end{equation}
which requires that $\phi((0,\infty)) = \infty$.
In this setting, the first passage time of the subordinator $H_t$ (i.e., the generalized right-inverse of $H_t$) is a process $L_t$
\begin{equation}
L_t = \inf\{s > 0\ :\ H_s > t\}.
\end{equation}
Since the sample paths of $H_t$ are a.s.\ strictly increasing, the process $L_t$ has a.s.\ continuous paths.

\section{The construction of the standard Brownian motion}
\label{sez2}

In their book \cite{Ikeda1981}, N.\ Ikeda and S.\ Watanabe provided the construction of a Brownian motion on the real line starting from
the collection of all excursions and then constructing the sample paths of the Brownian motion.
Here, we shall adapt their construction to our goal of defining a Brownian motion on the star graph.

\subsection{The space of positive excursions}
\label{sec:spexc}

Define $\cW_+$ the class of all continuous functions $w: [0,\infty) \to \bR_+$ with
\begin{itemize}
\item $w(0) = 0$, and
\item there exists $\sigma(w) > 0$ such that
\begin{itemize}
\item $w(t) > 0$, for $0 < t < \sigma(w)$, 
\item $w(t) = 0$ for $t \ge \sigma(w)$.
\end{itemize}
\end{itemize}
This space is called the \emph{space of positive excursions}. It may be endowed with the Borel $\sigma$-field $\sigma(\cW_+)$ generated by the cylindrical sets.

On the space $(\cW_+, \sigma(\cW_+))$ we define a $\sigma$-finite measure $n_+$ that satisfies
\begin{multline*}
n_+(\{w \in \cW_+\ :\ w(t_1) \in A_1, \dots, w(t_n) \in A_n\}) 
\\
= \int_{A_1} K(t_1, x_1) \, {\rm d}x_1 \int_{A_2} p_0(t_2 - t_1, x_1, x_2) \, {\rm d}x_2 \dots \int_{A_n} p_0(t_n - t_{n-1}, x_{n-1}, x_n) \, {\rm d}x_n
\end{multline*}
where
\begin{align*}
K(t,x) = \sqrt{\frac{2}{\pi t^3}} x \exp\left(-x^2/2t \right)
\end{align*}
is the density (in $t$) of the first passage time of the Brownian motion from level $x$, and 
\begin{align*}
p_0(t,x,y) = g_t(x-y) - g_t(x+y)
\end{align*}
is the density (in $y$) of the Brownian motion killed in $0$.

\begin{lemma}\label{le:mio1}
The measure of the set of excursions longer than $t$ is finite, and it holds
\begin{align*}
n_+(\{w \in \cW_+\ :\ \sigma(w) > t\}) = \sqrt{\frac{2}{\pi \, t}}.
\end{align*}
\end{lemma}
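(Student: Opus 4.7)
The plan is to rewrite the event $\{\sigma(w) > t\}$ in terms of a one-dimensional marginal of $w$, and then to evaluate a Gaussian-type integral.

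More precisely, I would first observe that by the very definition of $\cW_+$, an excursion $w$ satisfies $w(t) > 0$ if and only if $t < \sigma(w)$, while $w(t) = 0$ for $t \ge \sigma(w)$. Consequently, up to $n_+$-null sets,
\begin{align*}
\{w \in \cW_+\ :\ \sigma(w) > t\} = \{w \in \cW_+\ :\ w(t) \in (0,\infty)\}.
\end{align*}
This reduction is the conceptual step: once it is made, the right-hand side is described by the one-dimensional marginal of $n_+$, which is explicit from the definition.

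Next, I would specialize the finite-dimensional formula for $n_+$ in the statement to $n = 1$, $t_1 = t$, $A_1 = (0,\infty)$, giving
\begin{align*}
n_+(\{\sigma(w) > t\}) = \int_0^\infty K(t,x)\, {\rm d}x = \int_0^\infty \sqrt{\frac{2}{\pi t^3}}\, x\, e^{-x^2/2t}\, {\rm d}x.
\end{align*}
The remaining step is purely computational: the substitution $u = x^2/(2t)$, with $x\,{\rm d}x = t\,{\rm d}u$, transforms the integral into $\sqrt{2/(\pi t^3)}\cdot t \int_0^\infty e^{-u}{\rm d}u = \sqrt{2/(\pi t)}$, as desired.

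I do not foresee a genuine obstacle: the main content is the identification of $\{\sigma(w) > t\}$ with $\{w(t) > 0\}$, which is immediate from the definition of the space of positive excursions. The computation of the Gaussian integral is routine, and the finiteness claim is simply a byproduct of the closed form (and of the fact that $K(t,\cdot)$ is a probability density in $x$ once one recognizes it as the density of the first-passage time).
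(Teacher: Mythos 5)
Your proof is correct and follows exactly the same route as the paper: identify $\{\sigma(w)>t\}$ with $\{w(t)>0\}$, apply the one-dimensional marginal of $n_+$ with density $K(t,\cdot)$, and evaluate the resulting Gaussian integral to get $\sqrt{2/(\pi t)}$. The substitution you carry out is the same routine computation the paper leaves implicit.
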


\begin{proof}
It is sufficient to compute
\begin{align*}
n_+(\{w \in \cW_+\ :\ \sigma(w) > t\}) = n_+(\{w \in \cW_+\ :\ w(t) > 0 \}) = \int_{0}^\infty K(t, x) \, {\rm d}x = \sqrt{\frac{2}{\pi \, t}}.
\end{align*}
\end{proof}

\subsection{Poisson random measures and Poisson point process}

Let $(\cW, \sigma(\cW))$ be a measurable space.
A \emph{Poisson random measure} $\mu$ on $(\cW, \sigma(\cW))$ is a collection of random variables $\{\mu(B),\ B \in \sigma(\cW)\}$
such that
\begin{itemize}
\item $\mu(\emptyset) = 0$ a.s.;
\item for each $B \in \sigma(\cW)$, $\mu(B)$ is Poisson distributed 
whenever $\mu(B) < \infty$.
\item if $\{B_k\}$ are disjoint elements in $\sigma(\cW)$, then $\mu( \cup B_k) = \sum \mu(B_k)$ a.s.;
\item if $\{B_k\}$ are disjoint elements in $\sigma(\cW)$, then $\{\mu(B_k)\}$ are independent random variables.
\end{itemize}

Notice that $\mu$ induces a $\sigma$-finite measure $\lambda$ on $(\cW, \sigma(\cW))$ by setting  $\lambda(B) = \bE[\mu(B)]$.
$\lambda$ is the \emph{intensity measure} associated with $\mu$.
The converse result also holds: given an intensity measure $\lambda$,
there exists a Poisson random measure with prescribed intensity measure; see \cite[Theorem I.9.1]{Ikeda1981}.

\begin{theorem}
\label{th:ikeda.I.9.1}
Given a $\sigma$-finite measure $n$ on $(\cW, \sigma(\cW))$, there exists a 
Poisson random measure $\mu$ on a probability space $(\Omega, \cF, \bP)$ such that $n(A) = \bE[\mu(A)]$ for all $A \in \sigma(\cW)$.
\end{theorem}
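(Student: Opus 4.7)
The plan is to build $\mu$ by pasting together countably many independent pieces, each supported on a set of finite $n$-mass, using $\sigma$-finiteness of $n$ as the organizing principle.

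First, using $\sigma$-finiteness, I would choose a measurable partition $\cW = \bigsqcup_{k\ge 1} W_k$ with $c_k := n(W_k) \in [0,\infty)$. On each $W_k$ with $c_k > 0$ I would work with the probability measure $\nu_k := n(\cdot \cap W_k)/c_k$. Then on a sufficiently rich probability space $(\Omega, \cF, \bP)$ I would take a mutually independent family consisting of Poisson random variables $N_k \sim \mathrm{Poisson}(c_k)$ and iid sequences $\{X_{k,i}\}_{i\ge 1}$ with law $\nu_k$, independent across $k$ and of the $N_k$. Setting
\begin{equation*}
\mu_k(A) := \sum_{i=1}^{N_k} \uno_{A}(X_{k,i}), \qquad A \in \sigma(\cW),
\end{equation*}
(with the empty sum equal to $0$, and $\mu_k \equiv 0$ when $c_k = 0$), I would then define
\begin{equation*}
\mu(A) := \sum_{k\ge 1} \mu_k(A \cap W_k).
\end{equation*}

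The remaining work is to verify the four defining properties. Clearly $\mu(\emptyset) = 0$ a.s. For a single $A \in \sigma(\cW)$, I would compute the Laplace functional: given independence of the $\mu_k$ and the classical fact that conditional on $N_k$ the points $X_{k,1}, \dots, X_{k,N_k}$ give a Poisson random measure with intensity $c_k \nu_k = n(\cdot \cap W_k)$, each $\mu_k(A\cap W_k)$ is $\mathrm{Poisson}(n(A\cap W_k))$. Since these are independent across $k$, the sum $\mu(A)$ is $\mathrm{Poisson}(n(A))$ whenever $n(A) < \infty$ (so, in particular, $\mu(A) < \infty$ a.s.), and equals $+\infty$ a.s.\ when $n(A) = \infty$ by Borel--Cantelli applied to the independent Poisson summands. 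This also yields $\bE[\mu(A)] = \sum_k n(A \cap W_k) = n(A)$.

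For disjoint $\{B_j\}$, countable additivity $\mu(\bigcup_j B_j) = \sum_j \mu(B_j)$ a.s.\ follows from monotone convergence of the finite sums inside each $\mu_k$ (which are genuine counting measures on the atoms $X_{k,i}$). For independence of $\mu(B_1), \dots, \mu(B_m)$ when $B_1, \dots, B_m$ are disjoint, I would again split on the partition and condition on the vector $(N_k)_k$: given $N_k$, the multinomial allocation of $X_{k,1}, \dots, X_{k,N_k}$ into $B_1 \cap W_k, \dots, B_m \cap W_k$ and the leftover, combined with the Poissonization $N_k \sim \mathrm{Poisson}(c_k)$, implies that the counts across the $B_j \cap W_k$ are independent Poisson variables; independence across $k$ then gives independence of the $\mu(B_j)$ as sums over $k$ of independent families.

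The step I expect to be most delicate is handling the case $n(A) = \infty$ and, more generally, the independence of the counts on disjoint sets, because one must be careful that the conditioning argument passes through the countable sum over $k$ and that all null sets are absorbed uniformly in $A$. Everything else is bookkeeping: once the finite-intensity construction on each $W_k$ is in place and the independence across $k$ is arranged, the Poisson random measure axioms reduce to standard properties of independent Poisson variables and multinomial thinning.
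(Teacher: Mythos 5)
Your construction is correct and is essentially the standard one: the paper itself gives no proof but invokes \cite[Theorem I.9.1]{Ikeda1981}, whose argument is exactly your scheme of partitioning $\cW$ into sets of finite $n$-measure, building on each piece a Poisson number of i.i.d.\ points with normalized law, and summing the independent pieces. Your verification of the axioms (Poissonization, multinomial thinning for independence on disjoint sets, Borel--Cantelli for the infinite-mass case) matches that reference's treatment, so nothing further is needed.
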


Next, we add time to the above construction.
Consider the space $S = [0,\infty) \times \cW$ endowed with the $\sigma$-algebra $\cA = \cB([0,\infty)) \otimes \sigma(\cW)$.
A \emph{Poisson point process} $p = (p(t),\ t \in [0,\infty))$ 
is an adapted process taking values in $\cW$ such that the random measure
\begin{align*}
N(t,A) := N([0,t) \times A) = \#\{0 \le s < t\ :\ p(s) \in A\}, \qquad \text{for $t \ge 0$, $A \in \sigma(\cW)$,}
\end{align*}
is a Poisson random measure on $S$.

We shall denote $D_p$ the countable support of the point measure $p$ (that we can interpret as the times of jump). Notice that $D_p$ is itself a random set.

The Poisson point process $p$ on $(\cW, \sigma(\cW))$ is \emph{stationary} if
its intensity measure $\bE[N({\rm d}t, {\rm d}x)]$ satisfies
\begin{align*}
\bE[N(t, A)] = t n(A)
\end{align*}
for some measure $n$ on $(\cW, \sigma(\cW))$.
The \emph{compensated Poisson random measure}
\begin{align*}
\tilde{N}(t, A) = N(t, A) - t n(A), \qquad \text{for $t \ge 0$, $A \in \sigma(\cW)$}
\end{align*}
is a martingale.

\subsection{The construction of the standard Brownian motion on $\mE$}
\label{sez:constr}

Let us begin with the \emph{excursion space} for $\cZ$ 
\begin{align*}
\cW_Z =  \{1, \dots, n\}  \times  \cW_+
\end{align*}
(recall the definition of $\cW_+$ in Section \ref{sec:spexc}).
In other words, the excursions of $Z$ are simply the excursions of the reflecting Brownian motion paired with the choice of a ray in $\mE$.

As we have seen before,
the \emph{excursion point process} is a Poisson point process with intensity measure given by the product of the Lebesgue measure on $[0,\infty)$ with a unique $\sigma$-finite measure $n$ on the excursion space.
In our setting, the excursion measure $n$ on $\cW_Z$ is given by the product measure $\mu \times n_+$, where $n_+$ is the excursion measure on $\cW_+$ and $\mu$ is a probability measure on $\{1, \dots, n\}$ with point masses $\{p_j\}$.
\\
Then, for any $U \subset \cW_+$ and $j \in \{1, \dots, n\}$ we have
\begin{align*}
n(\{j\} \times U) = p_j \, n_+(U).
\end{align*}

As a consequence of Theorem \ref{th:ikeda.I.9.1} we have

\begin{theorem}
There exists a stationary Poisson point process $p$ on $\cW_Z$ with intensity measure 
$n({\rm d}t, \{j\} \times U) = {\rm d}t \, p_j \, n_+(U)$.
\end{theorem}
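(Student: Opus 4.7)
The plan is to reduce the claim to a direct application of Theorem \ref{th:ikeda.I.9.1}, with the mild preliminary check that the candidate intensity measure is $\sigma$-finite. First I would define a measure $n$ on the product $\sigma$-algebra of $\cW_Z = \{1,\dots,n\} \times \cW_+$ by setting $n(\{j\}\times U) = p_j\, n_+(U)$ for each $j$ and each $U \in \sigma(\cW_+)$, and extending by additivity; since $\{1,\dots,n\}$ is finite, this is literally the product of the probability measure $\mu = \sum_j p_j \delta_j$ and the excursion measure $n_+$. The $\sigma$-finiteness of $n$ reduces to that of $n_+$, which follows from Lemma \ref{le:mio1}: writing $\cW_+ = \bigcup_{k \ge 1} \{w \in \cW_+ \colon \sigma(w) > 1/k\}$, each set on the right has $n_+$-measure $\sqrt{2k/\pi} < \infty$.

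Next, I would form the product measure $\ell \otimes n$ on $S = [0,\infty) \times \cW_Z$, where $\ell$ denotes the Lebesgue measure; this is again $\sigma$-finite. Applying Theorem \ref{th:ikeda.I.9.1} to $(\ell\otimes n, S, \cA)$ yields a Poisson random measure $N$ on $S$ defined on some probability space $(\Omega, \cF, \bP)$ such that $\bE[N(B)] = (\ell\otimes n)(B)$ for all $B \in \cA$. To recover a Poisson point process $p$ in the sense of Section 2.2, I would set $p(t) = w$ whenever $N$ has an atom at $(t,w)$ and leave $p(t)$ undefined (conventionally in a cemetery state) otherwise; the countable support $D_p$ is then exactly the projection of the atoms of $N$ onto $[0,\infty)$, and $N(t,A) = \#\{s < t : p(s) \in A\}$ by construction. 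Adaptedness is obtained by taking the natural filtration generated by $N$.

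Finally, stationarity is automatic from the product form of the intensity: for each $A \in \sigma(\cW_Z)$,
\begin{align*}
\bE[N(t,A)] = (\ell \otimes n)\bigl([0,t) \times A\bigr) = t\, n(A),
\end{align*}
which is the required identity, and in particular $n({\rm d}t, \{j\}\times U) = {\rm d}t\, p_j\, n_+(U)$. The only step that is not purely bookkeeping is the $\sigma$-finiteness verification, but Lemma \ref{le:mio1} supplies this with no extra effort; consequently I do not expect any serious obstacle in the argument.
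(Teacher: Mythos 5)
Your proposal is correct and follows essentially the same route as the paper, which simply presents the theorem as a consequence of Theorem \ref{th:ikeda.I.9.1} applied to the product intensity ${\rm d}t \otimes (\mu \times n_+)$; you merely fill in the details the paper leaves implicit (the $\sigma$-finiteness of $n_+$ via Lemma \ref{le:mio1} and the passage from the Poisson random measure back to a point process). The only point worth noting explicitly is that reading off $p(t)$ from the atoms of $N$ uses the fact that, the time-marginal of the intensity being non-atomic, almost surely no two atoms share the same time coordinate.
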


The standard Brownian motion $\cZ$ on $\mE$ is constructed as follows. 
The radial component $Z(t)$ is a standard reflected Brownian motion, defined by the stationary Poisson point process $p_+$ on $\cW_+$ with intensity measure $n_+((0,t] \times U) = t \, n_+(U)$.

We introduce the increasing, right-continuous process associated with the reflected Brownian motion
\begin{align*}
A(s) = \int_0^{s^+} \int_{\cW_+} \sigma(x) \, N_+({\rm d}u, {\rm d}x) 
\end{align*}
where we recall that $\sigma(x) = \inf \{t > 0: x(t) = 0\}$ is the length of the excursion $x$, so that $A(s)$ counts the total length of the excursions touched by the Poisson point process $p$ on the time interval $[0,s]$.

Denote $\ell(t) = A^{-1}(t)$ the (pseudo) inverse of $A(t)$:
\begin{align*}
\ell(t) = A^{-1}(t) = \inf \{s \in \bar\bR\ :\ A(s) > t\} 
\end{align*}
with
\begin{align*}
A(\ell(t)^-) = \max\{A(s)\ :\ A(s) \le t\}
\end{align*}
Recall that $s \mapsto A(s)$ is strictly increasing, a.s.

We define a standard Brownian motion on $\mE$ as follows
\begin{align*}
\cZ(t) = p(u_0)(t) \qquad \text{if } 0 < t < A(u_0) = \sigma(p(u_0))
\end{align*}
and in general
\begin{align}
\label{eq:def-Z}
\cZ(t) =  p(\ell(t))(t - A(\ell(t)^-)).
\end{align}
If we denote $p(t) = (\alpha(t), p_+(t)) \in \{1, \dots, n\} \in \cW_+$; then we decompose the Brownian motion $\cZ(t)$ into the spherical and radial component as follows
\begin{align*}
Z(t) = p_+(\ell(t))(t - A(\ell(t)^-)), \qquad \Theta(t) = \alpha(t).
\end{align*}

\begin{corollary}
The above construction, in particular, implies that
$\ell^{\cZ}(t) = \ell(t)$ is the local time at the origin of the standard Brownian motion in $\mE$ and it coincides with the
local lime $\ell^+(t)$ of the one dimensional reflected Brownian motion.
\end{corollary}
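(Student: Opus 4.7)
The plan is to establish the two identities in the chain $\ell^{\cZ}(t) = \ell(t) = \ell^+(t)$ separately: first $\ell(t) = \ell^+(t)$ through It\^o's excursion representation of reflected Brownian motion on the half-line, and then $\ell^{\cZ}(t) = \ell^+(t)$ using the observation that the radial projection of $\cZ$ is exactly $Z$, together with the fact that the vertex of $\mE$ and the origin of the half-line are hit at precisely the same instants.

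For the equality $\ell(t) = \ell^+(t)$, I would invoke the classical It\^o reconstruction of reflected Brownian motion. If $B^+$ is a reflecting Brownian motion on $[0,\infty)$ and $\ell^+$ is its (Tanaka-normalized) local time at $0$, then the point process $s \mapsto e_s$ of excursions of $B^+$ parametrized by local time is a stationary Poisson point process on $\cW_+$ with intensity $n_+$, and the cumulative length $s \mapsto \sum_{u \le s} \sigma(e_u)$ is the right-continuous inverse of $\ell^+$. In the construction of Section \ref{sez:constr} one starts from a Poisson point process $p_+$ on $\cW_+$ with the same intensity $n_+$ and defines $A(s)$ to be the cumulative length of its atoms. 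By It\^o's reconstruction theorem (see \cite[Ch.~VI]{Ikeda1981}), concatenating the excursions in the order dictated by $u$ produces a bona fide reflected Brownian motion, and by construction this is exactly the radial component $Z$. Its inverse local time at $0$ therefore coincides with $A(s)$, whence
\begin{align*}
\ell(t) \;=\; A^{-1}(t) \;=\; \ell^+(t).
\end{align*}

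For the equality $\ell^{\cZ}(t) = \ell^+(t)$, I would argue as follows. In the quotient space $\mE$ the vertex $0$ is identified with the base point of every ray, so $\cZ(t) = 0$ if and only if $Z(t) = 0$. Hence the closed random zero set of $\cZ$ coincides with the zero set of $Z$. The local time $\ell^{\cZ}$ at the vertex is the unique continuous non-decreasing additive functional of $\cZ$ supported on this set and normalized by the occupation-time formula in any neighbourhood $B_r(0) \subset \mE$. Since the spherical component $\Theta$ is constant on every excursion (each atom of $p$ carries a single ray-label) and the measure on $B_r(0)$ is the direct sum of the Lebesgue measures on the $n$ rays, the occupation time of $\cZ$ in $B_r(0)$ equals the occupation time of $Z$ in $[0,r)$, from which $\ell^{\cZ}(t) = \ell^+(t)$ follows at once.

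The main obstacle is the final calibration step: verifying that the notion of local time at a graph vertex (which is not \emph{a priori} one-dimensional) gives back the one-dimensional local time of the radial projection with no extra factor. The key is the product factorization $n = \mu \otimes n_+$ of the excursion measure on $\cW_Z$: because $\mu$ is a probability measure on $\{1,\dots,n\}$, the atoms of $p$ have the same distribution of lengths as those of $p_+$, so the intensity of returns to the vertex is unchanged. Once this normalization is in place, the chain of equalities $\ell^{\cZ}(t) = \ell^+(t) = \ell(t)$ follows directly from It\^o's excursion theory applied to the radial component.
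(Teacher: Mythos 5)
Your proposal is correct and is essentially the argument the paper has in mind: the paper offers no written proof, treating the corollary as immediate from the excursion construction, and your two steps (identifying $A(s)$ with the inverse local time of the reflected radial component via It\^o's reconstruction theorem, then matching the zero sets and occupation times of $\cZ$ and $Z$ near the vertex using the product form $n = \mu \otimes n_+$ with $\mu$ a probability measure) are exactly the standard justification. No gaps.
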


Recall that the reflected Brownian motion $Z_t$ starting from $0$ satisfies the following properties:
\begin{enumerate}
\item the zero set $\zeta = \{t > 0\ :\ Z_t = 0\}$ has Lebesgue measure $0$;
\item the zero set $\zeta$ is dense in $(0,\varepsilon)$, for every $\varepsilon > 0$.
\end{enumerate}
Let us define $\tau_k^t$ the first time the process $\cZ$ enters the edge $k$ after time $t$, i.e.,
\begin{align*}
\tau_k^t = \inf\{s > t\ :\ \Theta_s = k\}.
\end{align*}
Then, given $\cZ(0) = 0$, it holds that 
\begin{align}
\text{for every $k = 1, \dots, n$, $\tau^0_k = 0$ almost surely.}
\end{align}

It is sufficient to prove that $\tau^0_k < \varepsilon$ for every $\varepsilon > 0$. But this follows because on $(0, \varepsilon)$ we
have an infinite number of returns to 0, hence for each $k$, $\{\Theta(t) = k\}$ occurs infinitely often in $(0, \varepsilon)$ almost surely.
For the arbitrariness of $\varepsilon$, we obtain the claim.

For $t > 0$, assume that $\Theta(t) \not= k$ (otherwise, by the continuity of trajectories, we have $\tau_k^t = t$ almost surely).
Let us denote $T^t_0 = \inf\{s > t\ :\ Z(s) = 0\}$.
Since $Z(t)$ has the same distribution of a reflected Brownian motion starting from 0, it follows that $\bP(Z(t) > 0) = 1$, and we can use the following formula obtained by the reflection principle
\begin{align*}
\bP(T^0_0 \in {\rm d}t \mid Z(0) = x) = \frac{x}{\sqrt{2 \pi t^3}} e^{-\frac{x^2}{2t}} \, {\rm d}t
\end{align*}
and strong Markov property to get
\begin{multline*}
\bP(T^t_0 \in {\rm d}s) = \int_0^\infty \bP^x(T^0_0 \in {\rm d}(s-t)) \, \bP(Z(t) \in {\rm d}x) 
\\
= \int_0^\infty \frac{x}{\sqrt{2 \pi (s - t)^3}} e^{-\frac{x^2}{2(s - t)}} \frac{2}{\sqrt{2 \pi t}} e^{-\frac{x^2}{2t}} \, {\rm d}x \, {\rm d}s 
= \frac{1}{\pi  s \sqrt{\frac{s-t}{t}}} \, {\rm d}s
\end{multline*}
or, equivalently, the first entrance time on the edge $k$ is
\begin{align*}
\bP(\tau^t_k \in {\rm d}s) = \left( \delta_t(s) \, \uno_{\{\Theta(t) = t\}} + \frac{1}{\pi  s \sqrt{\frac{s-t}{t}}}  \, \uno_{\{\Theta(t) \not = k\}} \right) \, {\rm d}s.
\end{align*}

As a consequence, we see that the restriction of the process $\cZ(t)$ to the edge $k$, defined as
\begin{align*}
Z_k(t) = \{0\} \, \uno_{\{\Theta(t) \not= k\}} + Z(t) \, \uno_{\{\Theta(t) = k\}}
\end{align*}
is not a strong Markov process, since the waiting time to leave the origin, given $X_k(t) = 0$, depends on the past history of the process.

\subsection{Infinitesimal generator}

In this section, following \cite{Kostrykin2012a}, we prove that the standard Brownian motion defined in \eqref{eq:def-Z}
has infinitesimal generator that satisfies
\eqref{eq:gen-bc-bm} with $a = c = 0$ as required by Definition \ref{def-Z}.

We shall denote $\tau_\varepsilon(\mz)$ the exit time from the ball of radius $\varepsilon$ around $\mz \in \mE$:
\begin{align*}
\tau_\varepsilon(\mz) = \inf\{t > 0\ :\ d(Z_t, \mz) \ge \varepsilon\}
\end{align*}

Let $f$ be a smooth function $f \colon \mE \to \bR$. 
Following \cite[Theorem 19.23]{Kallenberg2002},  we have 
\begin{align*}
A f(\mz) = \lim_{\varepsilon \downarrow 0} \frac{\bE[f(\cZ_{\tau_\varepsilon(\mz)}) - f(\mz) \mid \cZ_0 = \mz]}{\bE^\mz[\tau_\varepsilon(\mz)]}.
\end{align*}

Assume first that $\mz = (j,r)$ for $r > 0$, and suppose $\varepsilon < r$. Then $Z \sim B$ behaves like a one dimensional Brownian motion for small times, and we 
can use the following result about the exit time from a ball of the Brownian motion:
\begin{align*}
\bE[\tau_\varepsilon(r)] = \varepsilon^2
\end{align*}
(use the fact that $B^2_t - t$ is a martingale) and the symmetry of trajectories of the Brownian motion to get
\begin{align*}
Af(\mz) = \lim_{\varepsilon \downarrow 0} \frac12 \frac{f(j, r + \varepsilon) - 2 f(j, r) + f(j, r - \varepsilon)]}{\varepsilon^2} = \frac12 f_j''(r).
\end{align*}

Next we see what happens in the origin. First we state a result about the exit time from the ball that is proved in \cite[Lemma 2.1]{Kostrykin2012a}.

\begin{lemma}
For $\mz = 0$ it holds
\begin{align*}
{\bE^0[\tau_\varepsilon(0)]} = \varepsilon^2.
\end{align*}
\end{lemma}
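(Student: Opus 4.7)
The plan is to reduce the problem to a well-known fact about one-dimensional Brownian motion, exploiting the fact that the exit time from a ball centered at the origin depends only on the radial component.

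First, I would unpack the definitions. The distance on $\mE$ satisfies $d((j,x), 0) = x$, so the ball $B_\varepsilon(0)$ consists of all points whose radial coordinate is strictly less than $\varepsilon$. Consequently
\begin{align*}
\tau_\varepsilon(0) = \inf\{t > 0 \ :\ Z_t \ge \varepsilon\},
\end{align*}
where $Z_t$ is the radial component of $\cZ_t$. From the construction in Section \ref{sez:constr}, starting at $\cZ_0 = 0$ the radial component is a standard reflecting Brownian motion on $[0,\infty)$ issued from $0$, built from the excursion point process with intensity $n_+$. The spherical coordinate plays no role in $\tau_\varepsilon(0)$.

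Next, I would represent $Z_t = |B_t|$ for a standard Brownian motion $B_t$ on $\bR$ with $B_0 = 0$ (this is one standard realization of reflecting Brownian motion). Then
\begin{align*}
\tau_\varepsilon(0) = \inf\{t > 0\ :\ |B_t| \ge \varepsilon\},
\end{align*}
i.e.\ the first exit time of $B_t$ from the interval $(-\varepsilon,\varepsilon)$. This is a classical stopping time, almost surely finite (for instance by the law of the iterated logarithm or recurrence of one-dimensional Brownian motion).

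Finally, I would invoke the optional stopping theorem applied to the martingale $M_t = B_t^2 - t$. Because localizing with the bounded stopping times $\tau_\varepsilon(0) \wedge n$ gives $\bE[B_{\tau_\varepsilon(0)\wedge n}^2] = \bE[\tau_\varepsilon(0) \wedge n]$, and since $|B_{\tau_\varepsilon(0)\wedge n}| \le \varepsilon$, monotone convergence yields $\bE[\tau_\varepsilon(0)] \le \varepsilon^2 < \infty$. Then dominated convergence on the left side (the integrand is bounded by $\varepsilon^2$) and monotone convergence on the right side allow passing $n \to \infty$, giving
\begin{align*}
\bE^0[\tau_\varepsilon(0)] = \bE\bigl[B_{\tau_\varepsilon(0)}^2\bigr] = \varepsilon^2,
\end{align*}
since $|B_{\tau_\varepsilon(0)}| = \varepsilon$ by the continuity of paths. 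There is no real obstacle here; the only non-routine point is verifying that the radial component of the star-graph Brownian motion, as constructed from the Poisson excursion measure, truly coincides in law with a one-dimensional reflecting Brownian motion — but this is immediate from the fact that $n_+$ is exactly the Itô excursion measure used for the latter.
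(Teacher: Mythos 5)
Your proof is correct, but it is worth noting that the paper does not actually prove this lemma at all: it simply quotes it from Kostrykin--Potthoff--Schrader \cite[Lemma 2.1]{Kostrykin2012a}. Your route is to make the statement self-contained: you observe that $d(\cZ_t,0)$ is the radial component $Z_t$, that for the standard Brownian motion constructed in Section \ref{sez:constr} the process $Z$ started at $0$ is a reflecting Brownian motion, hence equal in law to $|B_t|$ for a one-dimensional Brownian motion $B$, so that $\tau_\varepsilon(0)$ is the exit time of $B$ from $(-\varepsilon,\varepsilon)$; the optional stopping argument with the martingale $B_t^2-t$ (localize at $\tau_\varepsilon(0)\wedge n$, then monotone and dominated convergence) then gives $\bE^0[\tau_\varepsilon(0)]=\varepsilon^2$. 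This is exactly the same device the paper uses for interior points $\mz=(j,r)$, $r>0$, so your argument is both elementary and stylistically consistent with the surrounding text; what the citation buys instead is generality, since the result in \cite{Kostrykin2012a} is formulated for Feller Brownian motions on the star graph without having to identify the radial part with $|B|$ explicitly, whereas your identification $Z\stackrel{d}{=}|B|$ is specific to the standard (Kirchhoff, non-sticky) case $a=c=0$, $b=1$ — which is precisely the case needed at this point of the paper, so no gap results.
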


Next, let $f \in C^2_0(\mE)$ with $f \in D(A)$. This entails the existence of the limit
\begin{align*}
A f(0) = \lim_{\varepsilon \downarrow 0} \frac{\bE^0[f(Z_{\tau_\varepsilon(0)}) - f(0)]}{\bE^0[\tau_\varepsilon(0)]}.
\end{align*}
As we have seen before, this is equal to
\begin{align*}
A f(0) = \lim_{\varepsilon \downarrow 0} \sum_{k=1}^d p_k \frac{f(k, \varepsilon) - f(0)}{\varepsilon^2}
\end{align*}
and by formally applying Taylor's formula we obtain
\begin{align*}
A f(0) = \lim_{\varepsilon \downarrow 0} \sum_{k=1}^d p_k \frac{\varepsilon f'_k(0) + \frac12 \varepsilon^2 f''_k(0)}{\varepsilon^2}
=  \frac12  f''(0) + \lim_{\varepsilon \downarrow 0} \frac{1}{\varepsilon}  \sum_{k=1}^d p_k f'_k(0) 
\end{align*}
therefore the limit exists and it is equal to $A f(0) = \frac12 f''(0)$ if and only if the Kirchhoff boundary condition holds.


\section{Sticky Brownian motion on $\mE$}
\label{sez:sticky}

In the first part of this section, we collect some properties of the sticky Brownian motion $\cX$ as defined in Definition \ref{de:SBM}.
We shall denote $T_0$ the first passage time from the origin $0 \in \mE$.
The first property stated in Definition \ref{de:BM-Kost} can be equivalently written as
\begin{align*}
\bE^{(i,x)}[f_1(\cX(t_1 \wedge T_0)) \dots f_k(\cX(t_k \wedge T_0))]
= \bE^x[f_1(i, X(t_1 \wedge T_0)) \dots f_k(i, X(t_k \wedge T_0))]
\end{align*}
for 
$k \in \bN$, $f_1, \dots, f_k \in C_0(\mE)$ and $0 \le t_1 < \dots < t_k < T_0$.

Since $\cX$ is a diffusion process, it is uniquely determined by either its generator $(A_\cX, D(A_\cX))$, 
the transition semigroup
\begin{align*}
\cQ_t f(\mx) = \bE^{\mx}[f(\cX(t))], \qquad t \ge 0, \quad \mx \in \mE, \quad f \in C_0(\mE)
\end{align*}
or the associated resolvent
\begin{align*}
\cU_\lambda f(\mx) = \int_0^\infty e^{-\lambda t} \cQ_t f(\mx) \, {\rm d}t, \qquad \lambda  \ge 0, \quad \mx \in \mE, \quad f \in C_0(\mE).
\end{align*}
Recall that the infinitesimal generator $(A_\cX, D(A_\cX))$ is, by Definition \ref{de:SBM},
\begin{align*}
A_\cX f(\mx) &= f_j''(x), \qquad \mx = (j, x) \\
D(A_{\cX}) &= \left\{f \in C_0^2(\mE) \ :\ \frac12 c f''(0) = b \sum_{k=1}^n p_k f'_k(0) \right\}.
\end{align*}

Now we characterize the transition semigroup
\begin{align*}
\cQ_t f ( \mx ) 
= & Q^D_t f_i(x) + \int_0^t  \cQ_{t-s} f(0)\, \mathbf{P}^x(T_0 \in {\rm d}s)
\end{align*}
where 
$Q^D_tf(x) = \mathbf{E}_x[f(X^D_t)]$ is the transition semigroup for the Brownian motion killed at the origin (Dirichlet semigroup) and the first passage time law depends only on the radial component $x$ of the starting point $\mx$, hence it is known to be \cite[page 107]{Revuz1999}
 $\displaystyle \mathbf{P}^x(T_0 \in {\rm d}s) = \frac{x}{s} g_s(x)$, and substituting in previous formula we have
\begin{align}
\label{eq.tr.sg.1}
\cQ_tf(\mx) 
= & Q^D_t f_i(x) + \int_0^t  \frac{x}{s} g_s(x) \cQ_{t-s} f(0)\, {\rm d}s,
\end{align}
which means that the knowledge of $\cQ_tf(0) = \bE^0[f(\cX_t)]$ is sufficient to determine the whole semigroup.
\\
Next, for any $t > 0$ it holds that $\cQ_tf$ belongs to $D(A)$, which implies that
\begin{align}
\label{eq:ident1}
\frac12 c (\cQ_t f)''(0) = b \sum_{k=1}^n p_k \left( \cQ_tf \right)'(k, 0).
\end{align}
Notice that in the left-hand side we have used the continuity of the second derivative in order to simplify the notation.

\medskip

Finally, we consider the resolvent operator. 
Let us compute the Laplace transform of \eqref{eq.tr.sg.1} to get
\begin{align}\label{eq.res.1}
\cU_\lambda f(\mx) = U_\lambda^D f_i(x) + \left( \underbrace{ \int_0^\infty e^{-\lambda t} \frac{x}{t} g_t(x) \, {\rm d}t }_{e^{-\sqrt{2\lambda } x} } \right) \cU_\lambda f(0)
\end{align}
where the resolvent operator of the killed Brownian motion is 
\begin{align*}
U^D_\lambda \varphi(x) = \int_0^\infty e^{-\lambda t} \int_{\bR_+} [g_t(x-y) - g_t(x+y)] \varphi(y) \, {\rm d}y \, {\rm d}t
= \frac{1}{\sqrt{2\lambda}} \int_{\bR_+} \left[ e^{-|x-y| \sqrt{2 \lambda}} - e^{-(x + y) \sqrt{2\lambda}} \right] \, \varphi(y) \, {\rm d}y.
\end{align*}

Taking the Laplace transform in both sides of \eqref{eq:ident1}, we aim to obtain the analog boundary condition for the resolvent operator $\cU_\lambda(\mx)$
\begin{align}
\label{eq:ident2}
c \int_0^\infty e^{-\lambda t} \partial_t \cQ_t f(0) \, {\rm d}t = b \sum_{k=1}^n p_k \partial_x \int_0^\infty e^{-\lambda t} \left( \cQ_tf \right)(k, x) \, {\rm d}t\Big|_{x=0}.
\end{align}
Next result provides a more tractable form of previous formula. 
Notice that in the left had side, we have uses the diffusion equation satisfied pointwise from the transition semigroup in order to compute a first order time derivative instead of the second order space derivative.

\begin{lemma}
\label{lemma10}
The resolvent operator of the sticky Brownian motion is completely determined by the following identities:
\begin{align}
\label{eq:ident3}
\begin{aligned}
\cU_\lambda f(\mx) &= U_\lambda^D f_i(x) + {e^{-\sqrt{2\lambda } x} }  \cU_\lambda f(0), \qquad \mx = (i, x)
\\
\left( \lambda + \frac{b}{c} \sqrt{2\lambda} \right) \cU_\lambda f(0)  &= f(0) + \frac{2b}{c}
 \sum_{k=1}^n p_k \hat{f_k}(\sqrt{2\lambda}). 
\end{aligned}
\end{align}
\end{lemma}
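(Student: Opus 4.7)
The first identity is essentially already established: applying the Laplace transform $\int_0^\infty e^{-\lambda t}\cdot dt$ to the transition semigroup representation \eqref{eq.tr.sg.1}, the convolution on the right becomes a product, and the factor $\int_0^\infty e^{-\lambda t}\frac{x}{t}g_t(x)\,{\rm d}t$ is explicitly $e^{-\sqrt{2\lambda}x}$ (this is the Laplace transform of the first passage time law from $x$ to $0$), yielding the first line of \eqref{eq:ident3}. So the content of the lemma is really the second identity, which encodes the Wentzell-type boundary condition at the vertex in resolvent form.

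The plan is to transport the boundary identity \eqref{eq:ident1}, equivalently \eqref{eq:ident2}, into a statement about $\cU_\lambda f$. On the left-hand side of \eqref{eq:ident2} I integrate by parts in $t$: since $\cQ_0 f(0)=f(0)$ and $\cQ_tf(0)$ is bounded (it is a Feller semigroup),
\begin{align*}
\int_0^\infty e^{-\lambda t}\partial_t \cQ_t f(0)\,{\rm d}t = \lambda\,\cU_\lambda f(0) - f(0),
\end{align*}
so the left-hand side becomes $c[\lambda\,\cU_\lambda f(0)-f(0)]$. On the right-hand side I swap $\partial_x$ with the Laplace integral (justified because $\cU_\lambda f(k,\cdot)\in C^1$ away from $0$ with a one-sided limit at the vertex, by the first identity together with the explicit form of $U^D_\lambda f_k$) and obtain $b\sum_k p_k\,\partial_x\cU_\lambda f(k,x)\big|_{x=0}$.

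Next I compute this right derivative using the first identity already proved. The derivative of $e^{-\sqrt{2\lambda}x}$ at $0^+$ is $-\sqrt{2\lambda}$, while the derivative of the Dirichlet resolvent
\begin{align*}
U^D_\lambda f_k(x) = \frac{1}{\sqrt{2\lambda}}\int_0^\infty\!\!\bigl[e^{-|x-y|\sqrt{2\lambda}}-e^{-(x+y)\sqrt{2\lambda}}\bigr] f_k(y)\,{\rm d}y
\end{align*}
at $x=0^+$ is obtained by differentiating under the integral (for $x$ small and positive, $|x-y|=y-x$ for almost every $y>0$), giving the two contributions $+\sqrt{2\lambda}\,e^{-y\sqrt{2\lambda}}$ and $+\sqrt{2\lambda}\,e^{-y\sqrt{2\lambda}}$; after the prefactor $1/\sqrt{2\lambda}$ cancels, this evaluates to $2\hat f_k(\sqrt{2\lambda})$. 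Consequently
\begin{align*}
\partial_x\cU_\lambda f(k,x)\Big|_{x=0} = 2\hat f_k(\sqrt{2\lambda}) - \sqrt{2\lambda}\,\cU_\lambda f(0).
\end{align*}

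Substituting both computations into \eqref{eq:ident2}, grouping the terms containing $\cU_\lambda f(0)$ on the left and dividing through by $c$, one arrives at
\begin{align*}
\Bigl(\lambda+\tfrac{b}{c}\sqrt{2\lambda}\Bigr)\cU_\lambda f(0) = f(0) + \tfrac{2b}{c}\sum_{k=1}^n p_k\,\hat f_k(\sqrt{2\lambda}),
\end{align*}
which is the second line of \eqref{eq:ident3}. The two identities together determine $\cU_\lambda f$ uniquely: the second solves for the single constant $\cU_\lambda f(0)$ (the coefficient $\lambda+\tfrac{b}{c}\sqrt{2\lambda}$ is strictly positive for $\lambda>0$), and then the first expresses $\cU_\lambda f(\mx)$ at every other point of $\mE$. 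The main obstacle is really the careful computation of $(U^D_\lambda f_k)'(0)$ from the absolute-value kernel, and the justification of differentiating the Laplace transform under the integral at the vertex; both are routine once one notes that only the right derivative at $0$ is needed and that the integrand is dominated uniformly in a small neighbourhood of $x=0$.
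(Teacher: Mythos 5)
Your proof is correct and follows essentially the same route as the paper: integration by parts turns the left side of \eqref{eq:ident2} into $c(\lambda\,\cU_\lambda f(0)-f(0))$, and the explicit differentiation of $U^D_\lambda f_k$ at $x=0^+$ gives $2\hat f_k(\sqrt{2\lambda})$, so that $\partial_x\cU_\lambda f(k,x)\big|_{x=0}=2\hat f_k(\sqrt{2\lambda})-\sqrt{2\lambda}\,\cU_\lambda f(0)$, exactly as in the paper's computation. No gaps; your added remarks on the first identity and on uniqueness are consistent with the surrounding text.
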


\begin{proof}
We can treat the left hand side of \eqref{eq:ident2} without computing the transition semigroup (by an integration by parts):
\begin{align*}
c \int_0^\infty e^{-\lambda t} \partial_t \cQ_t f(0) \, {\rm d}t = c \left( - f(0) + \lambda \cU_\lambda f(0) \right).
\end{align*}
The right hand side requires some more efforts.
First, recalling \eqref{eq.res.1}, we compute the space derivative of the resolvent operator of the killed Brownian motion
\begin{align*}
(U^D_\lambda \varphi)'(x) 
&= \left[ - \int_{(0,x)} e^{-(x-y) \sqrt{2 \lambda}} \varphi(y) \, {\rm d}y + \int_{(x,\infty)} e^{-(y-x) \sqrt{2 \lambda}} \varphi(y) \, {\rm d}y + \int_{(0,\infty)} e^{-(x + y) \sqrt{2\lambda}} \varphi(y) \, {\rm d}y\right]
\\
\lim_{x \downarrow 0} (U^D_\lambda \varphi)'(x) &= 2 \int_{(0,\infty)} e^{-\sqrt{2 \lambda} y} \varphi(y) \, {\rm d}y.
\end{align*}
It follows that
\begin{align*}
\lim_{x \downarrow 0} (\cU_\lambda f)'(i,x) = 2 \int_{(0,\infty)} e^{-\sqrt{2 \lambda} y} f_i(y) \, {\rm d}y -\sqrt{2\lambda} \cU_\lambda f_i(0)
\end{align*}
if we substitute in \eqref{eq:ident2} we obtain the thesis
\begin{align*}
\sum_{k=1}^n p_k \partial_x  \left( \cU_\lambda f \right)(k, x) \Big|_{x=0}
= -\sqrt{2\lambda} \cU_\lambda f(0)
+ 2 \sum_{i=1}^n p_i \int_{(0,\infty)} e^{-\sqrt{2 \lambda} y} f_i(y) \, {\rm d}y.
\end{align*}
\end{proof}

\subsection{The construction of a sticky Brownian motion}

Let us define
\begin{align}
\label{eq:timechange-SBM}
V(t) = t + \mu \,  \ell^\cZ(t), \qquad t \ge 0,
\end{align}
where $\ell^\cZ$ is the local time at $0 \in \mE$ of the standard Brownian motion on $\cZ$ on $\mE$.

In the following result, we prove that the process $\cX(t) = \cZ(V^{-1}(t))$ has the same infinitesimal generator of a sticky Brownian motion on $\mE$.
By construction, the inverse $V^{-1}(t)$ is a strictly increasing function that remains bounded by $t$, i.e., it slows down the reflecting Brownian motion $Z$ at the origin. Thus,
$\cX$ is forced to stop for a random amount of time at the origin.

\begin{theorem}
The process $\cX(t \wedge T_0)$ is equivalent in law to a Brownian motion absorbed at the origin for any starting point $\mx = (i, x) \not = 0$.
\end{theorem}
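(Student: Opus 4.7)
The plan is to show that the time change $V^{-1}$ acts as the identity map on $[0, T_0)$, so that $\cX$ coincides with $\cZ$ up to the first time it reaches the origin; once this is established, the conclusion follows immediately from Definition \ref{de:BM-Kost}.

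First I would fix the starting point $\mx = (i,x)$ with $x > 0$. Since $\cZ$ has continuous sample paths and starts off the vertex, the first hitting time $T_0^\cZ = \inf\{t > 0 : \cZ(t) = 0\}$ is strictly positive almost surely, and on the interval $[0, T_0^\cZ)$ the trajectory remains on the single ray $\me_i$. By the very construction of the local time $\ell^\cZ$ (it coincides with the local time $\ell^+$ of the reflecting Brownian motion by the Corollary above, and that local time is carried by the zero set $\{t : Z_t = 0\}$), we have $\ell^\cZ(t) = 0$ for every $t \in [0, T_0^\cZ]$. Consequently
\begin{align*}
V(t) = t + \mu \, \ell^\cZ(t) = t \qquad \text{for all } t \in [0, T_0^\cZ].
\end{align*}

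Next, since $V$ is nondecreasing and equals the identity on $[0, T_0^\cZ]$, its right-continuous inverse $V^{-1}$ also equals the identity on $[0, T_0^\cZ]$. Therefore
\begin{align*}
\cX(t) = \cZ(V^{-1}(t)) = \cZ(t) \qquad \text{for all } t \in [0, T_0^\cZ],
\end{align*}
and in particular $T_0 := T_0^\cX = T_0^\cZ$ almost surely. Stopping at $t \wedge T_0$ we get $\cX(t \wedge T_0) = \cZ(t \wedge T_0^\cZ)$, and this process stays on the ray $\me_i$ until it is absorbed at $0$.

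Finally, I would invoke Definition \ref{de:BM-Kost}: the radial component $Z$ of $\cZ$ with absorption at the origin is, by definition, equivalent to a one-dimensional Brownian motion on $\bR_+$ with absorption at $0$. Since on $[0, T_0^\cZ]$ the spherical component $\Theta$ is constant equal to $i$, the process $\cX(t \wedge T_0)$, identified with its radial component through the canonical embedding of the ray $\me_i$ into $[0,\infty)$, is equivalent in law to a Brownian motion started at $x$ and absorbed at $0$. No step here is technically difficult; the only point that deserves care is the identification $\ell^\cZ(t) = 0$ for $t < T_0^\cZ$, which is where the construction of $\ell^\cZ$ as the local time of the reflecting Brownian motion at the origin (and hence supported on the zero set) is essential.
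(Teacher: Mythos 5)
Your proposal is correct and rests on exactly the same key observation as the paper's proof, namely that $\ell^\cZ(t)=0$ for $t< T_0$ almost surely, so that $V(t)=t$ (hence $V^{-1}(t)=t$) up to the first passage time and $\cX$ coincides with $\cZ$ there. The only difference is cosmetic: the paper records this identification through the resolvent (Laplace transform) of the radial component, whereas you conclude directly from the pathwise equality $\cX(t\wedge T_0)=\cZ(t\wedge T_0)$ and Definition \ref{de:BM-Kost}, which is a perfectly valid (indeed slightly more direct) way to finish.
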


\begin{proof}
By construction,
since $\cX(t) = ( \Theta(t), X(t) )$ is the polar representation of the process $\cX$, with $\Theta(t)$ being the selected ray at time $t$ and $X(t)$ the corresponding radial component, it follows that $T_0$ is also the first passage time from 0 of the diffusion process $X$ on the half-line $\bR_+$.

In order to get the proof, we consider the resolvent operator
\begin{align*}
\bE^x \left[ \int_0^\infty e^{-\lambda t} X(t) \uno_{(t < T_0)} \, {\rm d}t \right]
=
\bE^x \left[ \int_0^\infty e^{-\lambda t} Z(V^{-1}(t)) \uno_{(t < T_0)} \, {\rm d}t \right]
=
\bE^x \left[ \int_0^\infty e^{-\lambda V(t)} Z(t) \uno_{(V(t) < T_0)} \, {\rm d}V(t) \right]
\end{align*}
and we observe that
\begin{align*}
\bP^x \left( \uno_{(t < T_0)} \ell^\cZ(t) = 0 \right) = 1
\end{align*}
or equivalently, we can take $V(t) = t$ in the last integral above, which implies, in particular,
\begin{align*}
\bE^x \left[ \int_0^\infty e^{-\lambda t} X(t) \uno_{(t < T_0)} \, {\rm d}t \right]
=
\bE^x \left[ \int_0^\infty e^{-\lambda t} Z(t) \uno_{(t < T_0)} \, {\rm d}t \right]
\end{align*}
hence the radial component $X$ behaves like a standard Brownian motion on $(t < T_0)$, as required.
\end{proof}

\begin{theorem}
The process $\cX(t) = \cZ(V^{-1}(t))$ is a sticky Brownian motion according to Definition \ref{de:SBM}.
\end{theorem}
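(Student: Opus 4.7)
The plan is to identify the resolvent of $\cX$ with the characterization given in Lemma \ref{lemma10}. Since a diffusion is determined by its resolvent, and the previous theorem already guarantees that $\cX(t \wedge T_0)$ coincides in law with a Brownian motion absorbed at $0$ (so that $\cX$ satisfies Definition \ref{de:BM-Kost}), it suffices to verify that $\cU^{\cX}_\lambda$ obeys the two identities of \eqref{eq:ident3} for some ratio $b/c>0$. The first of these follows immediately by applying the strong Markov property at $T_0$ and using $\bE^{\mx}[e^{-\lambda T_0}] = e^{-x\sqrt{2\lambda}}$: splitting the resolvent integral at $T_0$, one recognises the pre-$T_0$ part as $U^D_\lambda f_i(x)$ and the post-$T_0$ part as $e^{-x\sqrt{2\lambda}}\,\cU^\cX_\lambda f(0)$.

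For the second identity I would compute $\cU^\cX_\lambda f(0)$ directly by means of the time change $t = V(s) = s+\mu\,\ell^\cZ(s)$. Writing $\mathrm{d}t = \mathrm{d}s + \mu\,\mathrm{d}\ell^\cZ(s)$, and noting that $\ell^\cZ$ grows only on $\{\cZ_s = 0\}$, one obtains
\[
\cU^\cX_\lambda f(0) = \bE^0\!\left[\int_0^\infty e^{-\lambda V(s)} f(\cZ_s)\,\mathrm{d}s\right] + \mu\, f(0)\,\bE^0\!\left[\int_0^\infty e^{-\lambda V(s)}\,\mathrm{d}\ell^\cZ(s)\right].
\]
The second expectation reduces, via the change of variable $s = \tau_\ell$ (inverse local time) and the identity $\bE^0[e^{-\lambda\tau_\ell}] = e^{-\ell\sqrt{2\lambda}}$ (a consequence of computing the Laplace exponent of the stable-$1/2$ subordinator $A$ built from $n_+$ in Section \ref{sez:constr}), to $(\mu\lambda+\sqrt{2\lambda})^{-1}$. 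For the first term, I would decompose the integral along the excursions of $\cZ$ away from $0$: during the excursion indexed by local-time value $\ell$ one has $V(s) = \tau_\ell + \mu\ell + u$ with $u \in (0,\sigma(\epsilon_\ell))$, so
\[
\int_0^\infty e^{-\lambda V(s)} f(\cZ_s)\,\mathrm{d}s = \sum_\ell e^{-\lambda(\tau_\ell + \mu\ell)} \int_0^{\sigma(\epsilon_\ell)} e^{-\lambda u} f(\epsilon_\ell(u))\,\mathrm{d}u.
\]

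Applying the compensation formula for the excursion PPP (intensity $\mathrm{d}\ell \otimes n$, with $n = \mu_p \otimes n_+$ and $\mu_p$ the probability on $\{1,\dots,n\}$ with weights $\{p_k\}$), together with the classical computation $\int_{\cW_+} n_+(\mathrm{d}\epsilon)\int_0^{\sigma(\epsilon)}e^{-\lambda u}g(\epsilon(u))\,\mathrm{d}u = 2\hat{g}(\sqrt{2\lambda})$ (obtained by integrating against the explicit density $K(u,x)$ of Section \ref{sec:spexc} and recognising $\int_0^\infty e^{-\lambda u}K(u,x)\,\mathrm{d}u = 2e^{-x\sqrt{2\lambda}}$), the first expectation equals $(\mu\lambda+\sqrt{2\lambda})^{-1}\cdot 2\sum_k p_k\,\hat{f_k}(\sqrt{2\lambda})$. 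Combining the two contributions gives
\[
(\mu\lambda + \sqrt{2\lambda})\,\cU^\cX_\lambda f(0) = \mu f(0) + 2\sum_{k=1}^n p_k\,\hat{f_k}(\sqrt{2\lambda}),
\]
which, after dividing both sides by $\mu$, is precisely the second identity of \eqref{eq:ident3} with $b/c = 1/\mu$. Hence $\cX$ is a sticky Brownian motion in the sense of Definition \ref{de:SBM}, with parameter $1/\mu$.

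The main technical hurdle is the excursion-theoretic step: the compensation formula must be applied to a sum whose summand couples $\epsilon_\ell$ with the past history through $\tau_\ell$, and the factorisation that separates the Laplace transform of $\tau_\ell$ from the $n_+$-integral of the excursion functional rests on the independent-increments structure of the excursion PPP. Verifying the $n_+$-Laplace identity from the explicit form of $K(u,x)$ is a routine but crucial supporting calculation; everything else is bookkeeping.
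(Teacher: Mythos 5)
Your proposal is correct, and it follows the same overall strategy as the paper: establish the Brownian-motion property before $T_0$ (which you correctly delegate to the preceding theorem), then pin down the process by verifying the two resolvent identities of Lemma \ref{lemma10}, splitting $\cU_\lambda$ into the ${\rm d}s$ and $\mu\,{\rm d}\ell^\cZ$ contributions. Where you genuinely diverge is in how the Laplace transforms at the origin are evaluated. The paper stays at the level of explicit one-dimensional densities: for the ${\rm d}\ell$ part it integrates by parts and uses the law of $\ell^Z_t$, and for the post-$T_0$ part of the ${\rm d}s$ integral it invokes the joint density $\bP^0(Z_t\in{\rm d}y,\,\ell^Z_t\in{\rm d}\omega)=2\frac{y+\omega}{t}g_t(y+\omega)\,{\rm d}y\,{\rm d}\omega$ from It\^o--McKean and applies Fubini twice. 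You instead work with the excursion machinery that Section \ref{sez:constr} sets up anyway: the inverse local time $\tau_\ell$ as a $\tfrac12$-stable subordinator with $\bE^0[e^{-\lambda\tau_\ell}]=e^{-\ell\sqrt{2\lambda}}$ handles the ${\rm d}\ell$ part, and the compensation formula together with $\int_0^\infty e^{-\lambda u}K(u,x)\,{\rm d}u=2e^{-x\sqrt{2\lambda}}$ (correct with the paper's normalisation of $K$) handles the excursion sum. Both routes land on the same identity $(\mu\lambda+\sqrt{2\lambda})\,\cU_\lambda f(0)=\mu f(0)+2\sum_k p_k\hat f_k(\sqrt{2\lambda})$; your route is structurally cleaner and makes transparent why the ray weights $p_k$ enter linearly, at the price of having to justify the predictability/factorisation step in the master formula (you should write $\tau_{\ell-}$ rather than $\tau_\ell$ in the summand, though $\bE[e^{-\lambda\tau_{\ell-}}]=\bE[e^{-\lambda\tau_\ell}]$ for each fixed $\ell$, so nothing breaks), whereas the paper's route needs only classical densities. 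Your explicit remark that the resulting process is the sticky Brownian motion with $b/c=1/\mu$, i.e.\ the $(1/\mu)$-sticky process in the sense of Definition \ref{de:SBM}, is consistent with (and slightly more careful than) the paper's concluding identification $\mu=c/b$.
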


\begin{proof}
We shall provide the thesis by proving that  the process $\cX(t) = \cZ(V^{-1}(t))$ has resolvent operator that satisfies the equation \eqref{eq:ident3}.
Let us compute
\begin{align*}
\cU_\lambda f(\mx) &= \bE^{\mx} \left[ \int_0^\infty e^{-\lambda t} f(\cZ(V^{-1}(t))) \, {\rm d}t \right]
\\
&= \bE^{\mx} \left[ \int_0^\infty e^{-\lambda (t + \mu \ell^\cZ(t))} f(\cZ(t)) \, {\rm d}(t + \mu \ell^\cZ(t)) \right]
= \cU^1_\lambda f(\mx) + \cU^2_\lambda f(\mx).
\end{align*}

Let us start from
\begin{align*}
\cU^1_\lambda f(\mx) =& \bE^{\mx} \left[ \int_0^\infty e^{-\lambda (t + \mu \ell^\cZ(t))} f(\cZ(t)) \, {\rm d}t \right]
\\
=&  \bE^{\mx} \left[ \int_0^{T_0} e^{-\lambda (t + \mu \ell^\cZ(t))} f(\cZ(t)) \, {\rm d}t \right] +  \bE^{\mx} \left[ \int_{T_0}^\infty e^{-\lambda (t + \mu \ell^\cZ(t))} f(\cZ(t)) \, {\rm d}t \right]
\\
\intertext{where $T_0$ is the first passage time from the vertex 0 for the process $\cX$, but it coincides with the first passage time from the vertex 0 for the standard Brownian motion $\cZ$, and it further coincides with the lifetime of a killed  Brownian motion $W$ on $\bR_+$}
=&  \bE^x \left[ \int_0^\infty e^{-\lambda t} f_i(W(t)) \, {\rm d}t \right] + \bE^{\mx} \left[ e^{-\lambda T_0} \bE^0 \left[ \int_0^\infty e^{-\lambda t - \lambda \mu \ell^\cZ(t)} f(\cZ(t)) \, {\rm d}t \right] \right]
\\
\intertext{we identify the first term with the resolvent operator of the killed Brownian motion $U^D_\lambda f_i(x)$ and we decompose the second term according to the assigned probability distribution on the edges
}
=& U^D_\lambda f_i(x) +  \bE^x \left[ e^{-\lambda T_0} \right] \bE^0 \left[ \sum_{k=1}^n p_k \int_0^\infty e^{-\lambda t - \lambda \mu \ell^Z(t)} f_j(Z(t)) \, {\rm d}t \right] 
\\
\intertext{we employ the known joint distribution of $Z(t)$ and $\ell^Z(t)$, compare \cite[page 45]{Ito1974}
$\bP^0(Z(t) \in {\rm d}y,\ \ell^Z(t) \in {\rm d}\omega) = 2 \frac{y+\omega}{t} g_t(y+\omega)$, and we use twice Fubini's theorem to get}
=& U^D_\lambda f_i(x) + e^{-\sqrt{2\lambda} x} \sum_{k=1}^n 2 p_k \int_{(0,\infty)} \int_{(0,\infty)} e^{ - \lambda \mu \omega} f_k(y) e^{- \sqrt{2\lambda} (y + \omega)} \, {\rm d}\omega \, {\rm d}y
\\
\cU^1_\lambda f(\mx) =& U^D_\lambda f_i(x) +  \frac{2}{\lambda \mu + \sqrt{2\lambda}} e^{-\sqrt{2\lambda} x} \sum_{k=1}^n p_k \hat{f_k}(\sqrt{2\lambda}). 
\end{align*}
Next
\begin{align*}
\cU^2_\lambda f(\mx) =& \bE^{\mx} \left[ \int_0^\infty e^{-\lambda (t + \mu \ell^\cZ(t))} f(\cZ(t)) \, {\rm d}(\mu \ell^\cZ(t)) \right] 
\\
=& - \frac{1}{\lambda} \bE^{\mx} \left[ \int_0^\infty e^{-\lambda t} f(0) \, {\rm d}(e^{- \lambda \mu \ell^\cZ(t)}) \right] 
\\
=& - \frac{f(0)}{\lambda} \bE^{\mx} \left[ \left.e^{-\lambda t - \lambda \mu \ell^\cZ(t)}\right|_{t=0}^{\infty} + \lambda 
 \int_0^\infty e^{-\lambda t - \lambda \mu \ell^\cZ(t)} \, {\rm d}t \right] 
\\
=&  f(0) \bE^{x} \left[ \frac{1}{\lambda}  -  
 \int_0^\infty e^{-\lambda t - \lambda \mu \ell^Z(t)} \, {\rm d}t \right] 
\end{align*}
By taking $\mx = 0$ we obtain
\begin{align*}
\cU^2_\lambda f(0) =& f(0) \left[ \frac{1}{\lambda}  -  2
 \int_0^\infty  \int_0^\infty  e^{-\lambda t - \lambda \mu \omega } g_t(\omega) \, {\rm d}t \, {\rm d}\omega \right] 
\\
=& f(0) \left[ \frac{1}{\lambda}  -  \frac{2}{\sqrt{2\lambda}}
 \int_0^\infty    e^{-\sqrt{2\lambda} \omega - \lambda \mu \omega } \, {\rm d}\omega \right] = f(0) \left[ \frac{1}{\lambda}  -  \frac{2}{\sqrt{2\lambda}} \frac{1}{\sqrt{2\lambda} + \lambda \mu  } \right]
\end{align*}
and putting the above computation together we get
\begin{align*}
\cU_\lambda f(0) 
= \frac{2}{\lambda \mu + \sqrt{2\lambda}}  \sum_{k=1}^n p_k \hat{f_k}(\sqrt{2\lambda}) + \frac{\mu}{\lambda \mu + \sqrt{2\lambda}} f(0)
\\
\mu \left(\lambda + \frac1\mu \sqrt{2\lambda} \right) \cU_\lambda f(0) 
= 2  \sum_{k=1}^n p_k \hat{f_k}(\sqrt{2\lambda}) + \mu f(0)
\\
 \left(\lambda + \frac1\mu \sqrt{2\lambda} \right) \cU_\lambda f(0) 
= 2 \frac{1}{\mu}  \sum_{k=1}^n p_k \hat{f_k}(\sqrt{2\lambda}) +  f(0)
\end{align*}
which coincides with \eqref{eq:ident3} when we take $\mu = \frac{c}{b}$.
\end{proof}

\subsection{Elliptic problems associated to the sticky Brownian motion}
\label{sez.D}

in this section we consider two classical problems associated with a diffusion operator, namely the Dirichlet problem and the Poisson problem;
they are naturally associated with the exit probabilities and the mean exit time, respectively.

\subsubsection{Dirichlet problem}

Recall that the infinitesimal generator $(A_\cX, D(A_\cX))$ is, by Definition \ref{de:SBM},
\begin{align*}
A_\cX f(\mx) &= f_j''(x), \qquad \mx = (j, x) \\
D(A_{\cX}) &= \left\{f \in C_0^2(\mE) \ :\ \frac12 c f''(0) = b \sum_{k=1}^n p_k f'_k(0) \right\}.
\end{align*}
Let $r > 0$ and $B_r = B_r(0)$ the open ball centred at the origin $0 \in \mE$ and defined as $B_r = \mathcal{B} / \sim$ with $\mathcal{B}=\{\mathbf{b}_j=[0, r), \, j=1, \ldots, n\}$. We notice that $\partial B_0(r) = \{ (e, r),\ e = 1, \dots, n\}$ consists of exactly $n$ points.
We introduce the first passage times
\begin{align*}
    T_{(e,r)} = \inf \{t > 0\ :\ \cZ(t) = (e,r) \}
\end{align*}
and the exit time from the ball
\begin{align*}
    T^\star = \inf \{t > 0\ :\ |\cZ(t)| = r \} = \min \{ T_{(e,r)},\ :\ e=1, \dots, n\}.
\end{align*}

The Dirichlet problem associated with $A_\cX$ is
\begin{align*}
\tag{D}\label{e.D}
\begin{cases}
\text{find $u \in D(A_\cX)$ such that}\\
A_\cX u(\mx) = 0, \qquad \mx \in B_r \\
u((j,r)) = \alpha_j, \qquad j = 1, \dots, n.
\end{cases}
\end{align*}

Let $e \in \{1, \dots, n\}$ be fixed and define the the function
\begin{align}\label{e.def.u}
    u(\mx) = \bP( T^\star = T_{(e,r)} \mid \cZ(0) = \mx)
\end{align}
that is the probability that the first exit from the ball occurs along the edge $e$.
Similar to the case of a real valued Brownian motion, we prove here that
$u(t,\mx)$ is an harmonic function for the infinitesimal generator $A_\cX$ of the sticky Brownian motion $\cX$.

\begin{theorem}\label{th.D}
    The function $u(\mx)$ defined in \eqref{e.def.u} satisfies the Dirichlet problem \eqref{e.D} on the ball $B_0(r)$
    with boundary conditions
    \begin{align*}
        u_e(r) = 1, \qquad u_j(r) = 0 \quad j \not= e.
    \end{align*}
\end{theorem}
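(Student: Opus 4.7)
The goal is to verify three things: that $u$ satisfies the stated boundary values at $\{(j,r)\}_{j=1}^n$, that $u \in D(A_\cX)$, and that $A_\cX u \equiv 0$ on $B_r$. The boundary values are immediate from continuity of the trajectories of $\cX$: starting at $(j,r)$ one has $T^\star = 0 = T_{(j,r)}$, so $u(j,r) = \delta_{je}$.

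For harmonicity on the interior of an edge, I fix $\mx = (j,x)$ with $0 < x < r$ and pick $\varepsilon > 0$ small enough that $B_\varepsilon(\mx)$ lies in the open ray $\mathring\me_j$ and away from $\partial B_r$. For $\cX(0) = \mx \neq 0$ the local time $\ell^\cZ$ remains at zero for a positive amount of time, hence $V^{-1}(t) = t$ locally and $\cX$ coincides with $\cZ$ there; but inside $B_\varepsilon(\mx)$ the standard Brownian motion $\cZ$ is purely radial and behaves as a one-dimensional Brownian motion, so its exit location is uniform on $\{(j,x\pm\varepsilon)\}$. Applying the strong Markov property at the exit time yields the discrete mean-value identity
\begin{align*}
u(j,x) = \tfrac12 u(j,x-\varepsilon) + \tfrac12 u(j,x+\varepsilon),
\end{align*}
which together with continuity forces $u_j$ to be affine on $(0,r)$; in particular $u_j''(x) = 0$. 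Writing $u_j(x) = \alpha_j + \beta_j x$ and imposing continuity at the vertex together with the boundary data gives $\alpha_j = u(0)$ independent of $j$ and $\beta_j = (\delta_{je} - u(0))/r$.

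It then remains to verify the sticky Kirchhoff condition $\tfrac12 c\, u''(0) = b \sum_k p_k u'_k(0)$. Since $u_k''\equiv 0$ the left-hand side vanishes, so the condition reduces to $\sum_k p_k \beta_k = 0$. To check this I apply the strong Markov property at the exit time $\tau_\varepsilon(0)$ from the small ball $B_\varepsilon(0)$. The key observation is that $V^{-1}(\tau_\varepsilon^\cX(0)) = \tau_\varepsilon^\cZ(0)$, so the exit locations of $\cX$ and of $\cZ$ from $B_\varepsilon(0)$ coincide; by the product structure $\mu \times n_+$ of the excursion measure in the construction of Section \ref{sez:constr}, the exit ray is $k$ with probability $p_k$. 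The strong Markov property then gives
\begin{align*}
u(0) = \sum_{k=1}^n p_k\, u(k,\varepsilon) = u(0) + \varepsilon \sum_{k=1}^n p_k \beta_k,
\end{align*}
whence $\sum_k p_k\beta_k = 0$, as required.

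The main obstacle is not computational but conceptual: I must transfer spatial information from $\cZ$ to $\cX$ through the time change $V^{-1}$, and in particular justify that the weights $\{p_k\}$ appearing in the sticky Kirchhoff condition of Definition \ref{de:SBM} really do coincide with the ball-exit ray probabilities for the sticky motion. Once that identification is in hand, everything else reduces to elementary harmonic analysis on a finite union of intervals, and the explicit candidate $u(0) = p_e$, $u_j(x) = p_e + (\delta_{je} - p_e)x/r$ can be read off if desired.
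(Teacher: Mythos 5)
Most of your argument runs parallel to the paper's: the boundary values from path continuity, the midpoint mean-value identity $u_j(x)=\tfrac12 u_j(x-h)+\tfrac12 u_j(x+h)$ on each open ray giving $u_j''\equiv 0$ and affineness, and the identification of the exit-ray weights at the vertex with $\{p_k\}$ (which you justify via the product structure $\mu\times n_+$ of the excursion measure — this is in fact more explicit than the paper's appeal to ``symmetry''). Your verification of the Kirchhoff condition through the small-ball exit identity $u(0)=\sum_k p_k\,u(k,\varepsilon)$ is a legitimate variant of the paper's route, which instead derives the explicit formula for $u$ and checks the vertex condition by direct substitution.

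There is, however, a genuine gap: you never prove that $u$ is continuous at the vertex, you ``impose'' it. But $u$ is a specific, probabilistically defined function, and membership in $D(A_\cX)\subset C_0^2(\mE)$ requires $\lim_{x\to 0}u_j(x)=u(0)$ for \emph{every} ray $j$; this is exactly the point the paper flags as where ``most of the work'' lies. The identities you actually establish do not yield it: affineness $u_j(x)=\alpha_j+\beta_j x$ together with $u(0)=\sum_k p_k u_k(\varepsilon)$ for all small $\varepsilon$ only give $\sum_k p_k\alpha_k=u(0)$ and $\sum_k p_k\beta_k=0$, which is perfectly compatible with $\alpha_j\neq u(0)$ for individual rays, so your step ``$\alpha_j=u(0)$ independent of $j$'' is unsupported (and it is then used again when you write $u(k,\varepsilon)=u(0)+\beta_k\varepsilon$ in the vertex computation). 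To close the gap, argue as the paper does: apply the strong Markov property at $T_0\wedge T_{(j,r)}$ and the one-dimensional exit probabilities along the ray to obtain
\begin{align*}
u_j(x)=\delta_{je}\,\frac{x}{r}+\Bigl(1-\frac{x}{r}\Bigr)u(0), \qquad 0<x<r,
\end{align*}
which gives ray-by-ray continuity at $0$ and, combined with $u(0)=p_e$, the explicit solution; the Kirchhoff condition then follows either from your small-ball identity or by direct substitution as in the paper.
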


\begin{proof}
    Let us first notice that the boundary conditions are obviously satisfied.
    \\
    It remains to prove that the identity $A_\cX u(\mx) = 0$ is satisfied and that $u \in D(A_\cX)$. 
    Maybe not so surprisingly, most of the work is concerned with this last condition.
    \\
    Suppose for simplicity that $\mx = (j,x)$ with $|x| > 0$; then for small $h$ we have $B_\mx(h) = \{(j, y),\ |y - x| < h\}$, and the strong Markov property of $\cX$ implies
    \begin{align*}
    u_j(x) =& \bP( T^\star = T_{(e,r)} \mid \cX(T_{B_\mx(h)}) = (j,x-h)) \bP( \cX(T_{B_\mx(h)}) = (j, x-h)) 
    \\ 
    &+ \bP( T^\star = T_{(e,r)} \mid \cX(T_{B_\mx(h)}) = (j, x+h)) \bP( \cX(T_{B_\mx(h)})  = (j, x+h))
    \\
    =& \frac12 u_j(x-h) + \frac12 u_j(x+h)
    \end{align*}
    hence, by Schwarz's theorem
    \footnote{compare, for instance, \cite[page 137]{Chung}:
    \begin{theorem} 
	Let $f$ be a continuous function defined in an interval $(a,b)$ such that the generalised second derivative is well defined, for each $x \in (a,b)$ and $h > 0$ such that $(x-h, x+h) \subset (a,b)$, and satisfies
	\begin{align*}
	\lim_{h \to 0} \frac{f(x+h) - 2 f(x) + f(x-h)}{h^2} = \varphi(x)
	\end{align*}
	for a continuous function $\varphi(x)$ defined in $(a, b)$. 
	Then $f$ is two times continuously differentiable  and it holds $f''(x) = \varphi(x)$ for each $x \in (a,b)$.
	\end{theorem}
    }
    \begin{align*}
	u_j''(x) = 0, \qquad x \in (0,r).
    \end{align*}
    In particular, it is
    $u_j(x) = \alpha_j^e x + \beta_j^e$ for every $(j,x)$ with $x > 0$, and the first equation in \eqref{e.D} is satisfied.
    \\
    Now we prove continuity of $u$ in $0$ (since the function is linear for $|\mx| > 0$, the continuity is obvious).
    We have the following representation of $u(\mx)$: if $\mx = (e, x)$ then
    \begin{align*}
        u_e(x) =& \bP( T_r < T_0 \mid B(0) = x) + \bP( T_0 < T_r \mid B(0) = x) \bP( T^\star = T_{(e,x)} \mid \cZ(0) = 0)
        \\ =& \frac{x}{r} + \left( 1 - \frac{x}{r} \right) u(0)
    \end{align*}
    and, for $j \not= e$,
    \begin{align*}
        u_j(x) = \bP( T_0 < T_r \mid B(0) = x) \bP( T^\star = T_{(e,x)} \mid \cZ(0) = 0)
        = \left( 1 - \frac{x}{r} \right) u(0)
    \end{align*}
    which implies that $u_j(x) \to u(0)$ as $x \to 0$, for every $j$.
    \\
    Notice that by symmetry, the probability that the process starting in the origin reaches level $r$ along the edge $e$ equals to $p_e$:
    \begin{align*}
    u(0) = p_e.
    \end{align*}
    \\
    In particular, from the boundary conditions, we get
    \begin{align*}
        u_j(r) = 0, \quad u_j(0) = p_e \quad &\Longrightarrow \quad u_j(x) = p_e \left(1 - \frac{x}{r} \right), \qquad j \not= e,
        \\
        u_e(r) = 1, \quad u_e(0) = p_e \quad &\Longrightarrow \quad u_e(x) = p_e + \left(1 - \frac{x}{r} \right) (1 - p_e)
    \end{align*}
    We finally obtain, from previous representation, that
    \begin{align*}
    \frac12 \frac{c}{b} u''(0) = 0 = \sum_{j \not= e} p_j \left( -\frac{p_e}{r} \right) + p_e \frac{1-p_e}{r}.
    \end{align*}
    Hence $u \in D(A_\cX)$ and the proof is complete.
\end{proof}

\begin{remark}
The general solution of problem (D) is given by a linear combination of the $n$ different solutions that we obtain from previous theorem by rotating the values of $e$.
\end{remark}

\subsubsection{Poisson problem}

It is customary to identify functions that satisfy problem \eqref{e.D} with the {\em harmonic} functions on $\mE$.
In this section, we consider the realted {\em Poisson} problem
associated with $A_\cX$
\begin{align*}
\tag{P}\label{e.P}
\begin{cases}
\text{find $v \in D(A_\cX)$ such that}\\
\frac12 A_\cX v(\mx) = -1, \qquad \mx \in B_r \\
u_j(r) = 0, \qquad j = 1, \dots, n.
\end{cases}
\end{align*}

In this section we prove that the solution of the Poisson problem is related to the mean exit time from the ball $B_r(0)$
\begin{align}
\label{e.def.v}
    v(\mx) = \bE [ T^\star \mid \cZ(0) = \mx].
\end{align}

\begin{theorem}\label{th.P}
    The function $v(\mx)$ defined in \eqref{e.def.v} satisfies the Poisson problem (P) on the ball $B_r(0)$.
\end{theorem}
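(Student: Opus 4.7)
The argument parallels Theorem \ref{th.D} and rests on a strong Markov decomposition at the first visit to the vertex, combined with the time-change representation of $\cX$. For $\mx=(j,x)$ with $0<x<r$, on the ray $\me_j$ the sticky process $\cX$ coincides in law with a one-dimensional standard Brownian motion until it hits either $0$ or $r$, thanks to the earlier theorem ensuring that $\cX(t\wedge T_0)$ is equivalent in law to a Brownian motion absorbed at $0$. Using the classical identities $\bE^x[T_0\wedge T_r]=x(r-x)$ and $\bP^x(T_0<T_r)=(r-x)/r$ together with the strong Markov property at the instant the process first reaches the origin, I would write
\[
v_j(x) \;=\; x(r-x) \,+\, \frac{r-x}{r}\,v(0), \qquad 0<x<r.
\]

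To compute $v(0)$ I would exploit the construction $\cX(t)=\cZ(V^{-1}(t))$ from Section \ref{sez:sticky}, so that $T^\star=V(\tilde T^\star)$ with $\tilde T^\star$ the exit time of $\cZ$ from $B_r$. This yields
\[
v(0)\;=\;\bE^0[\tilde T^\star]\,+\,\tfrac{c}{b}\,\bE^0\!\left[\ell^\cZ(\tilde T^\star)\right].
\]
The first expectation equals $r^2$ by Dynkin's formula applied to the martingale $|\cZ_t|^2-t$, in line with the lemma already invoked to compute the generator of $\cZ$. For the second, the radial component of $\cZ$ is reflected Brownian motion and Tanaka's formula exhibits $Z_t-\ell^\cZ_t$ as a standard real Brownian motion; optional stopping at $\tilde T^\star$ (whose finite expectation ensures the required uniform integrability) gives $\bE^0[\ell^\cZ(\tilde T^\star)]=r$. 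Hence $v(0)=r^2+\tfrac{c}{b}r$, and the candidate becomes $v_j(x)=x(r-x)+\tfrac{r-x}{r}\bigl(r^2+\tfrac{c}{b}r\bigr)$.

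Verification is then routine. Differentiating twice yields $v_j''(x)=-2$ on each open edge, so $\tfrac12 A_\cX v(\mx)=-1$ pointwise in $B_r$; the boundary values $v_j(r)=0$ are immediate; and continuity at the vertex follows from $v_j(0)=r^2+\tfrac{c}{b}r=v(0)$ for every $j$, ensuring $v\in C_0(\mE)$. Since $v_k'(0)=-c/b$, one finds
\[
\tfrac12 c\,v''(0)\;=\;-c\;=\;b\sum_{k=1}^{n} p_k\, v_k'(0),
\]
which is precisely the sticky Kirchhoff condition, so $v\in D(A_\cX)$.

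The main obstacle is the calculation of $\bE^0[\ell^\cZ(\tilde T^\star)]$: it requires identifying the local time of the radial part of $\cZ$ with the local time at $0$ of a reflected Brownian motion on the half-line through Tanaka's formula, and then justifying optional stopping at the unbounded stopping time $\tilde T^\star$, which follows from $\bE^0[\tilde T^\star]=r^2<\infty$. Once this step is in place, substitution produces the explicit $v$, and the compatibility at the vertex is essentially forced by the identification $\mu=c/b$ that emerged in the construction of the sticky Brownian motion.
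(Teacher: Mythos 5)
Your proposal is correct, and its overall skeleton matches the paper's proof: the same strong Markov decomposition at the first visit to the vertex yielding $v_j(x)=x(r-x)+\frac{r-x}{r}\,v(0)$, the same explicit formula $v_j(x)=r^2-x^2+\frac{c}{b}(r-x)$, and the same verification of $\frac12 v_j''=-1$, the boundary values, continuity at the vertex, and the sticky Kirchhoff condition $\frac12 c\,v''(0)=-c=b\sum_k p_k v_k'(0)$. The one genuine difference is the evaluation of $v(0)$: the paper simply quotes the known mean exit time $r^2+\frac{c}{b}r$ of the one-dimensional sticky Brownian motion from \cite{BRHC2020}, whereas you derive it from the time-change construction, writing $T^\star=V(\tilde T^\star)$ with $V(t)=t+\mu\,\ell^\cZ(t)$, $\mu=c/b$, so that $v(0)=\bE^0[\tilde T^\star]+\frac{c}{b}\,\bE^0[\ell^\cZ(\tilde T^\star)]$, and then computing $\bE^0[\tilde T^\star]=r^2$ (the exit-time lemma already used for the generator) and $\bE^0[\ell^\cZ(\tilde T^\star)]=r$ via Tanaka's identification $Z_t-\ell^\cZ_t$ with a real Brownian motion and optional stopping, justified by $\bE^0[\tilde T^\star]<\infty$. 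Your route is self-contained and makes explicit why the constant $\frac{c}{b}$ appears (it is exactly the slope $\mu$ of the time change acting on the local time accumulated before exit), at the cost of a short martingale argument; the paper's route is shorter but rests on an external reference for the sticky exit time on the half-line. Both are valid, and the remaining steps coincide.
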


\begin{proof}
    At first, we notice that $v(0)$ is equal to the first passage time from level $r > 0$ for a sticky Brownian motion $X(t)$ in $[0, \infty)$ starting from $0$, since we are only interested in the radial part of the process $\cX(t)$.
    It holds, by \cite{BRHC2020}
    \begin{align*}
        v(0) = \bE[T^\star \mid \cZ(0) = 0] = \bE[T_r \mid X(0) = 0] = r^2 +  \frac{c}{b} r.
    \end{align*}
    Next, let $\mx = (j, x)$ and choose $h$ small enough that $(x-h, x+h) \subset (0,r)$. Then
    \begin{align*}
    v_j(x) =& 
    \bE[ T^\star - T_{(0,r)} \mid \cX(0) = \mx ] + \bE[ T_{(0,r)} \mid \cX(0) = \mx ]
    \\ =&
    \bE[ (T^\star - T_{(0,r)}) \uno_{\{T_{(0,r)} = T_0\}} \mid \cX(0) = \mx ] + \bE[ T_{(0,r)} \mid \cX(0) = \mx ]
    \\ =&
    \bE[ T^\star - T_{(0,r)} \mid \cX(T_{(0,r)} = 0, \cX(0) = \mx ] \bP( X(T_{(0,r)} = 0 \mid X(0) = \mx ) + \bE[ T_{(0,r)} \mid \cX(0) = \mx ]
    \\
    \intertext{In the first expectation, we use the strong Markov property of the sticky Brownian motion; in the second and last term, we use the known results for a one-dimensional Brownian motion to get}
    v_j(x) =& v(0) \frac{r-x}{r} + x(r-x).
    \end{align*}
   We observe, passing by, that the function $v_j(x)$ is independent of $j$, i.e., the solution is homogeneous in the various edges.
   Since $v(0)$ is known, we have an explicit representation for the solution
   \begin{align*}
   v_j(x) = r^2 - x^2 + \frac{ c}{b} (r - x),
   \end{align*}
   which implies $v_j(0) = v(0)$ and $v''_j(0) = -2$ for every $j$, hence $v \in C^2_0(\mE)$ and  $\frac12 v_j''(x) = -1$, hence the equation in problem \eqref{e.P} is satisfied; it remains to check, in order to prove that $v \in D(A_\cX$), the condition in 0
   \begin{align*}
   \frac12 c v''(0) = -c = b \sum_{k=1}^n p_k \left( - \frac{ c}{b}  \right)
   \end{align*}
   and the proof is complete.
\end{proof}

\begin{remark}
A similar result holds for the Brownian motion $\cZ(t)$, which means taking $c=0$ in the definition of the domain, and in which case it holds $\bE[ T^\star \mid \cZ(0) = \mx] = r^2 - |x|^2$.
\end{remark}

\section{Sticky Brownian motion with trapping star vertex}
\label{sez4}

In this section, we modify the time change used in \eqref{eq:timechange-SBM} to construct the sticky Brownian motion, in order to allow a \emph{random} time change of the form
\begin{align}
\label{eq:timechange-SBM-2}
V_H(t) = t + H \circ \mu \,  \ell^\cZ(t), \qquad t \ge 0,
\end{align}
where $\ell^\cZ$ is the local time at $0 \in \mE$ of the standard Brownian motion on $\cZ$ on $\mE$ and $H = \{H_t,\ t \ge 0\}$ is a subordinator independent from $\cZ$. We denote by
\begin{align*}
\Phi(\lambda) = \int_0^\infty \left( 1 - e^{-\lambda y} \right) \phi({\rm d}y), \quad \lambda>0
\end{align*}
the symbol of $H$ written in terms of the so-called associated L\'{e}vy measure $\phi$. Thus, it holds that
\begin{align*}
\mathbb{E}[e^{ -\lambda H_t} ] = e^{-t \Phi(\lambda)}, \quad \lambda>0,\; t\geq 0.
\end{align*}
The inverse process $L$ is defined as $L_t \colon= \inf\{s \ge 0\ :\ H_s > t\}$. 
The process $H$ has strictly increasing path and continuous right-inverse $L$. Moreover, the subordinator $H$ may have jumps, so that the inverse $L$ may have plateaux.
We also remark the known result that the stable subordinator is identified with the symbol $\Phi(\lambda) = \lambda^\alpha$, for $0 < \alpha < 1$.

The resulting process $\cY(t) = \cZ(V_H^{-1}(t))$ can be associated with a sticky Brownian motion and can be regarded as a Brownian motion with trap in the origin $0 \in \mE$. In particular, we are interested in the (local) Cauchy problem with non-local condition at the origin that we may associate to this process.

Recall that a standard Brownian motion $\cZ$ on the star graph has a radial component $Z$ that is a reflected Brownian motion on the positive half-line.
We shall use consistent notation for the two objects (so we denote the local times $\ell^\cZ$ and $\ell^Z$, first passage times from the vertex/origin $\cT_0$ and $T_0$).

In the one dimensional case, we know that the inverse to $V(t) = t + \mu \ell^{Z}(t)$ slows down the reflecting Brownian motion $Z$ at the origin. 
Thus, $Y(t) = Z \circ V_H^{-1}(t)$ is forced to stop for a random amount of time at the origin. 
Since $H$ is independent from the couple $(Z, {\ell^{Z}})$,  the holding time at the sticky point 0 is independent from $Z$. 

\begin{remark}
\label{remark:holdingT}
{\color{black}
If we assume that $\tau$ is the holding time (at zero) for the sticky Brownian motion $X=\{X_t, t\geq0\}$ on $[0, \infty)$, then 
\begin{align*}
\mathbb{P}_0(\tau>t | X_{\tau}) = \exp(- \mu t)
\end{align*}
for the positive rate $\mu = \frac{b}{c}$. This result has been discussed in \cite{Ito1963}: the exponential law guarantees the semigroup property and the definition of holding time follows by considering that, with the process $X$ starting at $x=0$, the probability $\mathbb{P}_0(\tau>t | X_\tau >0)$ describes the time the process spends at $x=0$. 
Moreover, the process  enjoys the Markov property and the sequence $\{\tau^i, \, i \in \mathbb{N}\}$ of holding times for $X$ are independent and identically distributed. 
For the process $Y$ we can introduce the sequence $\{\tau_Y^i, \; i \in \mathbb{N}\}$ of holding times for which (see \cite{FBVP2})
\begin{align*}
\mathbb{P}_0(\tau_Y^i > t | Y_{\tau_Y^i} >0) = \mathbb{P}_0(H_{\tau^i} > t | Y_{\tau_Y^i} >0) = \mathbb{P}_0(\tau^i > L_t | Y_{\tau_Y^i} >0)
\end{align*}
where we used the fact that $L=H^{-1}$ is and inverse process. As $\Phi(\lambda) =\lambda$ the process $H_t$ becomes the elementary subordinator and $Y_t = X_t$ in law. 
Since $H$ is independent from $Z$, then $H$ is independent from $\tau$. 
In particular, the process $Y$ moves on the path of $X$ (or $Z$) but it stops at $x=0$ for a longer amount of time according with the new holding time $\tau_Y^i = H \circ \tau^i$. 
We can therefore write  
\begin{align}
\label{holdingY}
\mathbb{P}_0(\tau^i > L_t | Y_{\tau_Y^i} >0) = \mathbb{P}_0(\tau^i > L_t | X_{\tau^i} >0) = \mathbb{E}_0[\exp(- \mu L_t)], \quad \forall\, i.
\end{align}
The holding times $\tau_Y^i$ are independent and identically distributed:
the independence follows immediately by observing that
\begin{align*}
H_{\tau^1} = H_{\tau^0 + \tau^1} - H_{\tau^0} \perp H_{\tau^1 + \tau^2} - H_{\tau^1} = H_{\tau^2}
\end{align*}  
where we used the properties of the subordinator $H$. 
We refer to \cite[Lemma 6]{FBVP2} for a detailed discussion. 
The process $L$ depends on the symbol $\Phi$ and we can study the mean amount of time the process spends on the sticky point in terms of $\Phi$, that is we may have finite and infinite mean amount of time (at $x=0$) and in case of infinite holding time we are able to characterize the tail behavior in \eqref{holdingY}. 

We conclude our discussion by recalling that, in case $\Phi(\lambda)=\lambda^\alpha$, that is $H$ is a stable subordinator, we have that
\begin{align*}
\mathbb{E}_0[\exp(- (b/c) L_t )] = E_\alpha(- (b/c) t^\alpha) = \sum_{k \geq 0} \frac{(-(b/c) t^\alpha)^k}{\Gamma(\alpha k +1)}
\end{align*}
is the well-known Mittag-Leffler function. We know that $E_\alpha \notin L^1(0,\infty)$ and the mean amount of (holding) time given by $\mathbb{E}_0[\tau^i_Y]$ is infinite. The process $Y$ spends an infinite mean amount of time at $x=0$.

In general the behavior on the boundary point (or boundary set, in higher dimension) can be associated with a delayed or a rushed effect depending on $\Phi$ as discussed in \cite{CapDovALEA}.
}
\end{remark}

\subsection{Probabilistic construction}

In the next result we discuss the equivalence of the excursions between the processes $\cY$ and $Y$ during an excursion (i.e., far from the origin).

\begin{lemma}
Assume that $\cY(0) = (e, x)$ with $x > 0$.
Then, for every function $f \in C_0(\mE)$ and $\lambda > 0$ it holds
\begin{align*}
\bE^{(e,x)} \int_0^{\cT_0} e^{-\lambda t} f(\cY_t) \, {\rm d}t = \bE^{x} \int_0^{T_0} e^{-\lambda t} f(e, Y_t) \, {\rm d}t.
\end{align*}
\end{lemma}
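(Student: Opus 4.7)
The plan is to exploit the fact that the random time change $V_H$ is trivial during the initial excursion away from the origin, so that $\cY$ agrees pathwise with the standard Brownian motion $\cZ$ on $[0,\cT_0)$. Once this is established, the claim collapses onto Definition \ref{de:BM-Kost}, which identifies the law of the radial component of $\cZ$ absorbed at $0$ with that of a one-dimensional Brownian motion absorbed at $0$.

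First, I would observe that because $\cY(0) = (e,x)$ with $x > 0$ and $\cZ$ has continuous trajectories, the origin is not visited before the strictly positive stopping time $\cT_0$; hence $\ell^{\cZ}(s) = 0$ for every $s \in [0,\cT_0)$. Combined with $H_0 = 0$ (a subordinator starts at the origin), this yields
\begin{align*}
V_H(s) = s + H_{\mu\, \ell^{\cZ}(s)} = s + H_0 = s \qquad \text{for all } s \in [0,\cT_0),
\end{align*}
so $V_H^{-1}(t) = t$ on the same interval and therefore $\cY_t = \cZ(V_H^{-1}(t)) = \cZ_t$ for every $t < \cT_0$. In particular $\cY$ and $\cZ$ share the same first hitting time $\cT_0$ of the vertex, so the integrand on the left-hand side can be replaced by $e^{-\lambda t} f(\cZ_t)$. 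By continuity of the trajectories of $\cZ$ the spherical component $\Theta_t$ stays equal to $e$ throughout $[0,\cT_0)$, hence $f(\cZ_t) = f(e, Z_t)$ where $Z$ denotes the radial part.

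Second, I would invoke Definition \ref{de:BM-Kost}: under $\bP^{(e,x)}$, the pair $(Z_t)_{t<\cT_0}$ together with the stopping time $\cT_0$ has the same law as a reflecting Brownian motion on $\bR_+$ killed at the origin and its hitting time of $0$, started from $x$. Applying the same identity-of-the-time-change argument in one dimension (now using $\ell^{Z}(s) = 0$ for $s < T_0$) shows that the one-dimensional process $Y = Z \circ V_H^{-1}$ coincides with $Z$ on $[0,T_0)$, so the right-hand side equals the corresponding integral against the one-dimensional Brownian motion absorbed at $0$. The two expectations therefore coincide.

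I do not foresee a genuine obstacle; the only subtlety is verifying rigorously that $V_H$ is the identity up to $\cT_0$, which rests entirely on the two elementary facts $H_0 = 0$ and $\ell^{\cZ}(s) = 0$ for $s < \cT_0$. Everything else is a direct translation between the graph-valued description and its one-dimensional radial counterpart, made possible by Definition \ref{de:BM-Kost}.
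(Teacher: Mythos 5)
Your proposal is correct and follows essentially the same route as the paper: you show $V_H$ is the identity before $\cT_0$ because the local time vanishes there, so $\cY$ coincides with $\cZ$ up to $\cT_0$, and then reduce to the one-dimensional radial picture, repeating the same argument for $Y$ versus $Z$. The paper phrases the reduction as the built-in equivalence of excursions of $\cZ$ and $Z$ rather than citing Definition \ref{de:BM-Kost}, but the content is the same.
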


\begin{proof}
Let us briefly remark that if $f \in C_0(\mE)$ then $f(e, \cdot) \in C_0(\bR_+)$ for every $e \in \{1, \dots, n\}$.
We start by a change of variable
\begin{align*}
\bE^{(e,x)} \int_0^{\cT_0} e^{-\lambda t} f(\cY_t) \, {\rm d}t 
= \bE^{(e,x)} \int_0^{\cT_0} e^{-\lambda t} f(\cZ \circ V^{-1}_H(t)) \, {\rm d}t 
= \bE^{(e,x)} \int_0^{V_H(\cT_0)} e^{-\lambda V_H(t)} f(\cZ_t) \, {\rm d}V_H(t) 
\end{align*}
but on $[0, \cT_0)$ we have $\ell^\cZ(t) = 0$, hence $V_H(t) = t$ and we get
\begin{align*}
\bE^{(e,x)} \int_0^{\cT_0} e^{-\lambda t} f(\cY_t) \, {\rm d}t = \bE^{(e,x)} \int_0^{\cT_0} e^{-\lambda t} f(\cZ_t) \, {\rm d}t.
\end{align*}
Now, since we have (by construction!) equivalence between excursions of the processes $\cZ$ and $Z$, we have
\begin{align*}
\bE^{(e,x)} \int_0^{\cT_0} e^{-\lambda t} f(\cZ_t) \, {\rm d}t = \bE^{x} \int_0^{T_0} e^{-\lambda t} f(Z_t) \, {\rm d}t,
\end{align*}
and the thesis follows by using analog equalities for the reflected Brownian motion.
\end{proof}

\begin{theorem}
\label{theorem:YXbar}
The $\lambda$-potential of the process $\cY$ equals the resolvent operator of the process $\tilde \cX$, that is a sticky Brownian motion on the star graph (according to Definition \ref{de:SBM})
with parameters $b'$ and $c'$ such that
\begin{align*}
\frac{c'}{b'} = \frac{c}{b} \frac{\Phi(\lambda)}{\lambda}.
\end{align*}
\end{theorem}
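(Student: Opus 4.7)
The plan is to compute the $\lambda$-potential $\cU^\cY_\lambda f(\mx) = \bE^\mx\bigl[\int_0^\infty e^{-\lambda t} f(\cY_t)\,dt\bigr]$ in closed form and compare it directly to the characterization \eqref{eq:ident3} of sticky Brownian motion resolvents from Lemma~\ref{lemma10}. Since that system of two identities uniquely determines the resolvent of a $\mu'$-sticky Brownian motion, matching both of them will be equivalent to proving the theorem. Throughout I write $\kappa := c/b$, so the time change of the theorem reads $V_H(s) = s + H(\kappa \ell^\cZ(s))$.

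\textbf{Step 1 (change of variable on $[0,\infty)$).} The process $V_H$ is strictly increasing and c\`adl\`ag, so $V_H^{-1}$ is continuous, while $V_H$ has jumps inherited from $H$, each producing a plateau $[V_H(s_0-),V_H(s_0)]$ on which $\cY \equiv 0$ (the local time $\ell^\cZ$ increases only when $\cZ = 0$). Writing the Lebesgue integral on the left as the sum of its restrictions to the image of $V_H$ and to the plateau intervals, and using the telescoping identity $\int_0^\infty d e^{-\lambda V_H(s)} = -1$ expanded via the continuous and jump parts of $V_H$, one finds that any possible drift contribution to $dV_H^c$ cancels and the resolvent takes the clean form
\begin{align*}
\cU^\cY_\lambda f(\mx) = \bE^\mx \int_0^\infty e^{-\lambda V_H(s)}\bigl[f(\cZ_s) - f(0)\bigr]\, ds + \frac{f(0)}{\lambda}.
\end{align*}

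\textbf{Step 2 (independence of $H$ from $\cZ$).} Conditioning on $\cZ$ and using $\bE[e^{-\lambda H(u)}] = e^{-u \Phi(\lambda)}$ turns the random time change into a deterministic Feynman--Kac weight $e^{-\kappa \Phi(\lambda)\,\ell^\cZ(s)}$ inside the expectation. For $\mx = 0$, splitting $f(\cZ_s) = \sum_k p_k f_k(Z_s)$ for $Z_s > 0$ (the complement is Lebesgue-null in $s$) and substituting the joint density $\bP^0(Z_s \in dy,\,\ell^Z(s) \in d\omega) = 2\frac{y+\omega}{s} g_s(y+\omega)\,dy\,d\omega$ already used in the sticky case, the inner $s$-integral reduces to the first-passage Laplace transform $\int_0^\infty e^{-\lambda s}\frac{y+\omega}{s}g_s(y+\omega)\,ds = e^{-(y+\omega)\sqrt{2\lambda}}$. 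The remaining $(y,\omega)$ integrals are elementary and yield
\begin{align*}
\bE^0 \int_0^\infty e^{-\lambda V_H(s)} f(\cZ_s)\, ds = \frac{2}{\kappa\Phi(\lambda) + \sqrt{2\lambda}} \sum_{k=1}^n p_k \hat{f_k}(\sqrt{2\lambda}),
\end{align*}
together with $\bE^0\int_0^\infty e^{-\lambda V_H(s)}\,ds = \frac{2}{\sqrt{2\lambda}(\kappa\Phi(\lambda)+\sqrt{2\lambda})}$ by the same computation with $f\equiv 1$.

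\textbf{Step 3 (assembly and comparison with \eqref{eq:ident3}).} Combining Steps 1--2 at $\mx = 0$ and using $\frac{1}{\lambda} - \frac{2}{\sqrt{2\lambda}(\kappa\Phi(\lambda)+\sqrt{2\lambda})} = \frac{\kappa\Phi(\lambda)}{\lambda(\kappa\Phi(\lambda)+\sqrt{2\lambda})}$ gives, after clearing the denominator,
\begin{align*}
\left(\lambda + \frac{\lambda}{\kappa\Phi(\lambda)}\sqrt{2\lambda}\right) \cU^\cY_\lambda f(0) = f(0) + \frac{2\lambda}{\kappa\Phi(\lambda)} \sum_{k=1}^n p_k \hat{f_k}(\sqrt{2\lambda}),
\end{align*}
which is exactly the second identity in \eqref{eq:ident3} with $b'/c' = \lambda/(\kappa\Phi(\lambda))$, i.e.\ $c'/b' = (c/b)\Phi(\lambda)/\lambda$. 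For $\mx = (i,x)\ne 0$, the strong Markov property combined with the fact that $\ell^\cZ \equiv 0$ on $[0,\cT_0)$ (so that $V_H(s) = s$ and $\cY_s = \cZ_s$ for $s < \cT_0$, with $\bE^\mx[e^{-\lambda \cT_0}] = e^{-\sqrt{2\lambda}x}$) produces $\cU^\cY_\lambda f(\mx) = U^D_\lambda f_i(x) + e^{-\sqrt{2\lambda}x}\,\cU^\cY_\lambda f(0)$, matching the first identity of \eqref{eq:ident3}. By Lemma~\ref{lemma10} this identifies $\cU^\cY_\lambda$ with the resolvent of a sticky Brownian motion with parameters $b', c'$ as claimed.

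The main obstacle is Step 1: the change-of-variable formula must be carried out rigorously in the presence of the jumps of $V_H$, since the earlier proof for $\cX$ relies crucially on the continuity of its time change. The delicate point is to recognize the plateaux as the source of the $f(0)/\lambda$ contribution and to verify that a possible drift of $H$ drops out exactly against the singular measure $d\ell^\cZ$ term. Once that representation is in place, the remainder reduces to Laplace-transform algebra with the joint law of $(Z_s,\ell^Z(s))$ already exploited for the sticky Brownian motion.
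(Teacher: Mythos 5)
Your proposal is correct, and it shares the paper's two essential ingredients: the decomposition of the $\lambda$-potential at the first passage time $\cT_0$ (with $V_H\equiv\mathrm{id}$ before $\cT_0$, giving $U^D_\lambda f_i(x)+e^{-\sqrt{2\lambda}x}\,\cU_\lambda f(0)$), and the use of the independence of $H$ from $\cZ$ to convert the random weight $e^{-\lambda H(\kappa\ell^\cZ(s))}$ into the deterministic Feynman--Kac weight $e^{-\kappa\Phi(\lambda)\ell^\cZ(s)}$. Where you diverge is in how the argument is closed. The paper performs a double integration by parts, writing $\int_0^\infty e^{-\lambda V_H}f(\cZ)\,dV_H=-\lambda^{-1}\int f(\cZ)\,de^{-\lambda V_H}$, inserting the independence step, and then undoing the integration by parts with the deterministic time change $T(t)=t+\mu\frac{\Phi(\lambda)}{\lambda}\ell^\cZ_t$ so as to recognize the expression literally as the resolvent of $\tilde\cX=\cZ\circ T^{-1}$; this reuses the earlier construction theorem as a black box but involves the formal differential $df(\cZ(t))$, which is not a Stieltjes differential since $\cZ$ has unbounded variation. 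You instead compute $\cU_\lambda f(0)$ in closed form via the joint law of $(Z_s,\ell^Z_s)$ (effectively rerunning the sticky-BM resolvent computation with $\lambda\mu$ replaced by $\kappa\Phi(\lambda)$) and match the result against the algebraic characterization \eqref{eq:ident3} of Lemma \ref{lemma10}. Your Step 1 is the more careful of the two treatments of the time change: you isolate the plateaux of $V_H^{-1}$ as the source of the $f(0)/\lambda$ term and observe that a drift of $H$ contributes only through $d\ell^\cZ$, which is supported where $f(\cZ_s)-f(0)=0$ and hence cancels --- a point the paper does not address. The algebra checks out: $\frac1\lambda-\frac{2}{\sqrt{2\lambda}(\kappa\Phi(\lambda)+\sqrt{2\lambda})}=\frac{\kappa\Phi(\lambda)}{\lambda(\kappa\Phi(\lambda)+\sqrt{2\lambda})}$ yields exactly the second identity of \eqref{eq:ident3} with $b'/c'=\lambda/(\kappa\Phi(\lambda))$. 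One cosmetic remark: your $\kappa=c/b$ follows the convention used in the paper's construction (where $V(t)=t+\mu\ell^\cZ(t)$ produces a sticky motion with $\mu=c/b$), which is internally inconsistent with Definition \ref{de:SBM}'s $\mu=b/c$; that is the paper's ambiguity, not an error of yours.
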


\begin{proof}
The proof is based on a direct computation, and makes use of the independence between the subordinator and the Brownian motion $\cZ$. Assume that $f$ is a continuous and bounded function on $\mE$. We start by computing
\begin{align*}
\cU_\lambda f(\mx) &= \bE^{\mx}\left[ \int_0^\infty e^{-\lambda t} f(\cY_t) \, {\rm d}t \right]
\\
&= \bE^{\mx} \left[ \int_0^{T_0} e^{-\lambda t} f(\cZ \circ V^{-1}_H(t)) \, {\rm d}t \right] + \bE^{\mx} \left[ e^{-\lambda T_0} \right] \, \bE^{0} \left[ \int_0^t e^{-\lambda t} f(\cZ \circ V^{-1}_H(t)) \, {\rm d}t \right]
= \cU_\lambda^1 f(\mx) + \cU^2_\lambda(\mx).
\end{align*}
Recall that $V_H(t) = t + H \circ \mu \ell^\cZ(t)$, hence on $[0,T_0)$ it holds $V_H(t) = t$; therefore
\begin{align*}
\cU_\lambda^1 f(\mx) = \bE^{\mx} \left[ \int_0^{T_0} e^{-\lambda t} f(\cZ(t)) \, {\rm d}t \right].
\end{align*}
Moreover, a standard computation leads to 
\begin{align*}
\bE^{\mx} \left[ e^{-\lambda T_0} \right] = e^{-\sqrt{2 \lambda} x}.
\end{align*}
It remains to examine the last term. Since $V_H(t)$ is a continuous and strictly increasing, the same holds for its inverse process, and we can write
\begin{multline*}
\bE^{0} \left[ \int_0^\infty e^{-\lambda t} f(\cZ \circ V^{-1}_H(t)) \, {\rm d}t \right]
= \bE^{0} \left[ \int_0^\infty e^{-\lambda V_H(t)} f(\cZ(t)) \, {\rm d}V_H(t) \right]
= \bE^{0} \left[ -\frac{1}{\lambda} \int_0^\infty f(\cZ(t)) \, {\rm d}e^{-\lambda V_H(t)} \right]
\\
= -\frac{1}{\lambda} \bE^0 \left[ \left.e^{-\lambda V_H(t)} f(\cZ(t))\right|_{t=0}^\infty - \int_0^t e^{-\lambda V_H(t)} \, {\rm d} f(\cZ(t)) \right] = \frac{1}{\lambda} \bE^0 \left[  f(0) + \int_0^t e^{-\lambda V_H(t)} \, {\rm d} f(\cZ(t)) \right]
\end{multline*}
Now, since $H$ and $\cZ$ are independent, by taking a conditional expectation in the last term it is possible to write
\begin{align*}
\bE^0 \left[  \int_0^t e^{-\lambda V_H(t)} \, {\rm d} f(\cZ(t)) \right] = \bE^0 \left[  \int_0^t \bE \left[ e^{-\lambda V_H(t)} \right] \, {\rm d} f(\cZ(t)) \right] = \bE^0 \left[  \int_0^t  e^{-\lambda t -\Phi(\lambda) \mu \ell^{\cZ}_t}  \, {\rm d} f(\cZ(t)) \right].
\end{align*}
Let us define a time change
\begin{align*}
T(t) = t + \mu \frac{\Phi(\lambda)}{\lambda} \ell^\cZ_t;
\end{align*}
a second application of the integration by parts formula implies
\begin{multline*}
\bE^{0} \left[ \int_0^\infty e^{-\lambda t} f(\cZ \circ V^{-1}_H(t)) \, {\rm d}t \right]
= \frac{1}{\lambda} \bE^0 \left[  f(0) + \int_0^t e^{-\lambda T(t)} \, {\rm d} f(\cZ(t)) \right]
\\
= \frac{1}{\lambda} \bE^0 \left[   \int_0^t e^{-\lambda T(t)} f(\cZ(t)) \, {\rm d}T(t) \right] = \frac{1}{\lambda} \bE^0 \left[   \int_0^t e^{-\lambda t} f(\cZ \circ T^{-1}(t)) \, {\rm d}t \right].
\end{multline*}
Summing up, we obtain
\begin{multline*}
\cU_\lambda f(\mx) = \bE^{\mx} \left[ \int_0^{T_0} e^{-\lambda t} f(\cZ \circ T^{-1}(t)) \, {\rm d}t \right] + \bE^{\mx} \left[ e^{-\lambda T_0} \right] \, \bE^{0} \left[ \int_0^t e^{-\lambda t} f(\cZ \circ T^{-1}(t)) \, {\rm d}t \right]
\\
= \bE^{\mx}\left[ \int_0^\infty e^{-\lambda t} f(\tilde \cX_t) \, {\rm d}t \right].
\end{multline*}
Notice that the relations
\begin{align*}
\frac{c'}{b'} = \frac{c}{b} \frac{\Phi(\lambda)}{\lambda}, \qquad b' + c' = 1
\end{align*}
univocally identify the sticky Brownian motion $\tilde \cX$.
\end{proof}


{\subsection{Non-local operators in time with dynamic conditions}}

In the literature, several alternative definitions and formulations of fractional derivatives have been proposed,
such as the Riemann-Liouville \cite{Podlubny1999} and Gr\"unwald-Letnikov \cite{Diethelm2004} derivatives;
in this paper, we consider a Caputo-Djrbashian type operator associated with the L\'evy measure $\phi$ of a subordinator $H$ through the formula
\begin{align}
\label{eq:defCDop}
\mathfrak{D}^\Phi_t u(t, x) = \int_0^t \frac{\partial u}{\partial s}(s,x)\, \overline{\phi}(t-s)\,ds, \quad t>0, \; x \in D
\end{align}
where $\overline{\phi}(z)=\phi(z,\infty)$ is the tail of $\phi$. 
The operator $\mathfrak{D}^\Phi_t$ coincides with the well-known Caputo or Caputo-Djrbashian derivative as $\Phi(z)=z^\alpha$ with $\alpha\in (0,1)$ which is the case of stable subordinators. The convolution-type operator known as Caputo-Djrbashian derivative has been introduced by the first author in the works \cite{caputoBook, CapMai71,CapMai71b} and by the second author who actively investigated this operator starting from the papers \cite{Dzh66, DzhNers68}. The general operator in \eqref{eq:defCDop} has been considered in \cite{Koc2011} and after in \cite{Chen17, Kolo19, Toaldo2015}.

It is well-known that the relation between the fractional derivative operator $\mathfrak{D}^\Phi_t$ and the subordinator $H$ (and its inverse $L$)
allows an analysis of  PDEs with local boundary conditions and the probabilistic representation of their solutions. The well-known theory can be referred to as non-local initial value problems or non-local Cauchy problems. Here we deal with local problems equipped with non-local boundary conditions, we say non-local boundary value problems. Despite the vast contributes on non-local initial value problems, the literature on non-local boundary value problems seems to be very lacking.

Non-local initial value problems \emph{on the positive half line} involving such a operator have been considered for example in \cite{Chen2017, Koc2011, Toaldo2015}. Their definitions of $\mathfrak{D}^\Phi_t$ slightly differ as well as the characterization of their results. 

A standard condition for \eqref{eq:defCDop} to be well defined is usually given by requiring that $t \mapsto u(t, \cdot)$ belongs to the set $W^{1,\infty}(0, \infty)$ of essentially bounded functions with essentially bounded derivatives. This requirement well agrees with the Laplace machinery. Indeed, by considering that \eqref{eq:defCDop} is defined as a convolution-type operator, we get
\begin{align*}
\int_0^\infty e^{-\lambda t} \mathfrak{D}^\Phi_t u(t, x)\, dt 
& = \big(\lambda u(\lambda, x) - u(0,x) \big) \left( \int_0^\infty e^{-\lambda t} \bar{\phi}(t)dt \right)
\end{align*}
where (\cite{Bertoin1999})
\begin{align}
\int_0^\infty e^{-\lambda t} \bar{\phi}(t)dt = \frac{\Phi(\lambda)}{\lambda}, \quad \lambda>0
\label{tailLap}
\end{align} 
and $u(\lambda, x)$ is the Laplace transform of $u(t,x)$. If $u, u^\prime$ are bounded, then the Laplace transforms of $u, u^\prime$ are well-defined. Thus, we consider $u \in W^{1, \infty}(0, \infty) \cap C(D)$ for a bounded set $D \subset \mathbb{R}^d$, $d\geq 1$. We introduce a further characterization by asking for the following condition to be satisfied:
\begin{align}
\label{condMD}
\exists\, M_D>0\,:\, \bigg| \frac{\partial u}{\partial s}(s,x) \bigg| \leq  M_D\, \frac{\kappa(ds)}{ds}
\end{align} 
where 
\begin{align*}
\kappa(ds) = \int_0^\infty \mathbb{P}^0(H_t \in ds) dt
\end{align*}
is the potential measure for the subordinator $H$ with symbol $\Phi$. Since $\kappa$ and $\bar{\phi}$ are associated Sonine kernels for which 
\begin{align*}
\int_0^t \bar{\phi}(t-s) \kappa(ds) =1
\end{align*} 
and
\begin{align}
\label{unifBoundD}
| \mathfrak{D}^\Phi_t u(t, x)| \leq M_D \int_0^t \bar{\phi}(t-s) \kappa(ds),
\end{align} 
then we obtain that $|\mathfrak{D}^\Phi_t u(t,x)|$ is uniformly bounded on $(0, \infty) \times D$.

Moving on the star graph, for the operator
\begin{align*}
\mathfrak{D}^\Phi_t u(t, \mx) = \int_0^t \frac{\partial u}{\partial s}(s, \mx) \bar{\phi}(t-s)ds, \quad t>0,\; \mx \in \mE
\end{align*}
we may consider $u \in W^{1, \infty}(0, \infty) \cap C_b(\mE)$. By following the previous arguments, we consider the following condition:
\begin{align}
\label{condME}
\exists\, M_{\mE}>0\,:\, \bigg| \frac{\partial u}{\partial s}(s,\mx) \bigg| \leq  M_{\mE}\, \frac{\kappa(ds)}{ds}.
\end{align} 

\begin{remark}
For the positive solutions $u(t,x)$ and $u(t,\mx)$ respectively under \eqref{condMD} and \eqref{condME}, we observe that:
\begin{itemize}
\item[i)] $u(s,x) \leq M_{D} \, \kappa((0,s]) = M_{D} \, \mathbb{E}^0[L_s], \; s\geq 0,\; x \in D$;
\item[ii)] $u(s,\mx) \leq M_{\mE}\,  \kappa((0,s]) = M_{\mE} \, \mathbb{E}^0[L_s], \; s\geq 0,\; \mx \in \mE$.
\end{itemize}
Indeed,
\begin{align*}
\kappa((0,s]) = \int_0^s \kappa(dz) = \int_0^\infty \mathbb{P}^0(H_t < s) dt = \int_0^\infty \mathbb{P}^0(t < L_s) dt = \mathbb{E}^0[L_s] 
\end{align*}
where we used the fact that $L$ is the inverse of $H$.
\end{remark}

We are now ready to focus on the problem
\begin{align}
\tag{NL}\label{problemNLBVP}
\left\lbrace
\begin{array}{ll}
\displaystyle \frac{\partial u}{\partial t}(t, \mx) = \frac12 \frac{\partial^2 u}{\partial x^2}(t, \mx), \quad t>0, \; \mx \in \mE \setminus \{0\}\\
\\
\displaystyle c \, \mathfrak{D}^\Phi_t u(t,0) = b \sum_{k=1}^n p_k u_k'(t,0) , \quad t>0 \\
\\
\displaystyle u(0, \mx) = u_0(\mx), \quad x \in \mE, \quad u_0 \in C_0(\mE)
\end{array}
\right .
\end{align}
which involves a non-local operator in the boundary condition as a non-local dynamic condition. Thus, we are dealing with a non-local boundary value problem. 

A diffusion problem on the half-line, with fractional dynamic boundary condition, described in terms of a time fractional derivative (the Caputo derivative $D^\alpha_t$ depending on $\Phi(z)=z^\alpha$, $\alpha \in (0,1)$) has been recently introduced in  \cite{FBVP1, FBVP2} and further extended to non-local diffusion problem in \cite{Colantoni2022}. We extend the construction provided in these papers, as our aim is to show that the solution to the non-local boundary value problem \eqref{problemNLBVP} can be written as
\begin{align}
\label{solNLBVP}
u(t,\mx) = \mathbb{E}^{\mx} [ u_0(\cY_t)] = \mathbb{E}^{\mx}\left[ u_0(\cZ \circ V_H^{-1}(t)) \right], \quad t>0,\; \mx \in \mE
\end{align}
where $\cZ$, $V_H$ and $\cY = \cZ \circ V_H^{-1}$ have been previously introduced. We also recall the $\lambda$-potential
\begin{align*}
\cU_\lambda u_0(\mx) &= \bE^{\mx}\left[ \int_0^\infty e^{-\lambda t} u_0(\cY_t) \, {\rm d}t \right], \quad \lambda>0, \; \mx \in \mE.
\end{align*}

Let us consider the space
\begin{align*}
D_L := \bigg\{ \varphi \in C((0, \infty) \times \mE) \textrm{ with } \varrho = \varphi |_{\mx =0} \textrm{ such that } \varrho, \frac{d \varrho}{d t}, \mathfrak{D}^\Phi_t \varrho \in C(0, \infty) \textrm{ and }   \bigg| \frac{d \varrho}{d t}(t) \bigg| \leq M_0 \frac{\kappa(dt)}{dt}, \; M_0>0\bigg\}.
\end{align*}

\begin{theorem}
\label{thm:NLBVP}
The solution $u \in C^{1,2}((0, \infty) \times \mE) \cap D_L$ to the problem \eqref{problemNLBVP} has the probabilistic representation \eqref{solNLBVP}.
\end{theorem}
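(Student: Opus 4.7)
The plan is to define the candidate $u(t,\mx) := \mathbb{E}^{\mx}[u_0(\cY_t)]$ and verify each of the three requirements of \eqref{problemNLBVP} separately. The PDE on $\mE\setminus\{0\}$ and the initial datum are essentially classical consequences of the construction of $\cY$; the crux is the non-local dynamic boundary condition at the vertex, which I would establish in the Laplace domain by combining Theorem \ref{theorem:YXbar} with Lemma \ref{lemma10}.

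For the easy parts: since $u_0 \in C_0(\mE)$, both the expectation and the $\lambda$-potential $\cU_\lambda u_0(\mx) = \bE^{\mx}\bigl[\int_0^\infty e^{-\lambda t} u_0(\cY_t)\,dt\bigr]$ are well-defined for every $\lambda>0$. The initial condition $u(0,\mx)=u_0(\mx)$ follows from right-continuity of paths of $\cY$. For the heat equation on each open ray $\mathring\me_j$, I would use the fact that during any excursion away from the origin the local time $\ell^\cZ$ is constant, so $V_H(t)=t$ on the excursion interval and $\cY$ coincides with $\cZ$ there; the radial component of $\cZ$ on an excursion is a standard one-dimensional Brownian motion, so a routine Dynkin/semigroup argument yields $\partial_t u(t,\mx)=\tfrac12 u_j''(t,x)$ on $\mathring\me_j$.

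The main step is the non-local boundary condition. Taking Laplace transform in $t$ and using the convolution structure of $\mathfrak{D}^\Phi_t$ together with \eqref{tailLap}, the left-hand side becomes
\begin{align*}
c\int_0^\infty e^{-\lambda t}\mathfrak{D}^\Phi_t u(t,0)\,dt \;=\; c\,\Phi(\lambda)\,\cU_\lambda u_0(0) \;-\; c\,\frac{\Phi(\lambda)}{\lambda}\,u_0(0).
\end{align*}
By Theorem \ref{theorem:YXbar}, $\cU_\lambda u_0$ equals the resolvent of a sticky Brownian motion $\tilde\cX$ whose parameters satisfy $c'/b' = (c/b)\,\Phi(\lambda)/\lambda$ and $b'+c'=1$. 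Applying Lemma \ref{lemma10} to this resolvent and multiplying the resulting identity through by $c\,\Phi(\lambda)/\lambda$ rearranges exactly to
\begin{align*}
c\,\Phi(\lambda)\,\cU_\lambda u_0(0) \;-\; c\,\frac{\Phi(\lambda)}{\lambda}\,u_0(0) \;=\; b\sum_{k=1}^n p_k\,\partial_x \cU_\lambda u_0(k,0^+),
\end{align*}
which is precisely the Laplace transform of the target identity $c\,\mathfrak{D}^\Phi_t u(t,0) = b\sum_k p_k\, u'_k(t,0)$. Inverting the Laplace transform yields the pointwise boundary condition.

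The main obstacle is the regularity needed to legitimise these manipulations. One must verify that $t\mapsto u(t,0)$ belongs to $D_L$ (so that $\mathfrak{D}^\Phi_t u(t,0)$ is well-defined and admits an absolutely convergent Laplace transform via \eqref{condME} and the uniform bound \eqref{unifBoundD}), and that the spatial derivatives $\partial_x \cU_\lambda u_0(k,0^+)$ recovered from Lemma \ref{lemma10} do correspond, after Laplace inversion, to the pointwise normal derivatives $u'_k(t,0)$; the assumption $u\in C^{1,2}((0,\infty)\times\mE)\cap D_L$ is exactly what is needed to interchange the Laplace transform with the boundary trace and with $\mathfrak{D}^\Phi_t$. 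Uniqueness within this regularity class then follows from the injectivity of the Laplace transform applied to the difference of two admissible solutions.
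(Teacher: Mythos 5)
Your proposal is correct and takes essentially the same route as the paper: the crux in both is to Laplace-transform the dynamic vertex condition, identify $\cU_\lambda u_0$ with the resolvent of the sticky Brownian motion $\tilde\cX$ of Theorem \ref{theorem:YXbar} (with $c'/b'=(c/b)\,\Phi(\lambda)/\lambda$), apply Lemma \ref{lemma10} together with \eqref{tailLap}, and conclude by injectivity of the Laplace transform within the regularity class $C^{1,2}\cap D_L$. The only minor deviation is that the paper verifies the heat equation on $\mE\setminus\{0\}$ through the resolvent decomposition $\cU_\lambda u_0=\cU^1_\lambda u_0+\cU^2_\lambda u_0$ (Dirichlet resolvent plus remainder, giving $A\,\cU_\lambda u_0=\lambda\,\cU_\lambda u_0-u_0$) rather than your excursion/Dynkin argument, which is an equivalent way to obtain the same interior equation.
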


\begin{proof}
First we write $\cU_\lambda u_0 = \cU^1_\lambda u_0 + \cU^2_\lambda u_0$ as in the proof of Theorem \eqref{theorem:YXbar}. Now notice that $\cU^1_\lambda u_0$ belongs to the domain of the Dirichlet Laplacian (with Dirichlet boundary condition at $0=(j,0)$ $\forall\, j \in \{1,2,\ldots, n\}$). That is
\begin{align*}
A \cU^1_\lambda u_0(\mx) = \lambda \cU^1_\lambda u_0(\mx) - u_0(\mx)
\end{align*}
(where, as usual, $A$ is the differential operator $A f(\mx) = \frac12 f_j''(x)$, $j=1,2, \ldots, n$) and $\cU^1_\lambda$ is the resolvent operator of the Brownian motion killed in the origin. A direct computation leads to
\begin{align*}
A \cU^2_\lambda u_0(\mx) = \lambda \cU^2_\lambda u_0(\mx);
\end{align*}
therefore
\begin{align*}
A \cU_\lambda u_0(\mx) = \lambda \cU_\lambda u_0(\mx) - u_0(\mx)
\end{align*}
and we identify the heat equation on $\mE \setminus \{0\}$.

Next, we determine the boundary condition. In particular, by Definition \ref{de:SBM} and Theorem \ref{theorem:YXbar},
\begin{align}
\label{domainPotentialU}
\cU_\lambda u_0 \in  \left\{f \in C_0^2(\mE) \ :\ \frac12 c \frac{\Phi(\lambda)}{\lambda}  f''(0) = b \sum_{k=1}^n p_k f'_k(0) \right\}.
\end{align}
Indeed, we recall from formula \eqref{eq:ident3} applied to the sticky Brownian motion $\tilde \cX$ that
\begin{align*}
\left( \lambda + \frac{b'}{c'} \sqrt{2 \lambda} \right) \cU_\lambda f(0) = f(0) + \frac{2 b'}{c'} \sum_{k=1}^n p_k \hat f_k(\sqrt{2 \lambda})
\end{align*}
which, by passing through 
\begin{align*}
\lambda\cU_\lambda f(0) - f(0) = \frac{b'}{c'} \left( - \sqrt{2 \lambda}  \cU_\lambda f(0) + \sum_{k=1}^n p_k \hat f_k(\sqrt{2 \lambda}) \right),
\end{align*}
takes the form
\begin{align*}
\frac{\Phi(\lambda)}{\lambda} \left(\lambda\cU_\lambda f(0) - f(0) \right) = \frac{b}{c} \left( - \sqrt{2 \lambda}  \cU_\lambda f(0) + \sum_{k=1}^n p_k \hat f_k(\sqrt{2 \lambda}) \right).
\end{align*}
By comparison, we recognize in the left hand side the Laplace transform of the non-local operator $\mathfrak{D}^\Phi_t u(t, 0)$ and in the right hand side (see the computation in Lemma \ref{lemma10}) that of 
\begin{align*}
\frac{b}{c} \sum_{k=1}^n p_k u_k'(t,0),
\end{align*}
which implies \eqref{domainPotentialU}. As simple arguments show, we observe that, $\forall\, t>0$,
\begin{align*}
\lim_{\mx \to 0\in \mE} \mathfrak{D}^\Phi_t u(t, \mx) = \mathfrak{D}^\Phi_t \varpi(t)
\end{align*}
where
\begin{align*}
\varpi(t) =  \lim_{\mx \to 0\in \mE} u(t, \mx).
\end{align*}
Since $u \in D_L$, then $\mathfrak{D}^\Phi_t \varpi$ is well-defined.
This identifies the non-local (dynamic) equation on the vertex $0\in \mE$. 

Notice that $u_0 \in C_0(\mE)$ implies $\cU^1_\lambda u_0 \in C^2_0(\mE)$ via Dirichlet semigroup. Moreover, $\cU_\lambda u_0 \in C^2_0(\mE)$. This is a direct consequence of the equivalence between $\cX$ on $\mE$ and $X$ on $[0, \infty)$. Indeed, for the generator $A_X$ of $X$ we have $D(A_X) \subset C^2_0([0, \infty))$. 

Uniqueness follows from the Laplace techniques: there exists at most one continuous inverse, since our inverse $u$ to $\cU_\lambda u_0$ is continuous, then $u$ is unique.

\end{proof}


\begin{theorem}
\label{thm:HoldingTime}
For the sequence of holding times $\{\tau^i\}_i$ at $0 \in \mE$ of the process $\cY$ on $\mE$, it holds that:
\begin{itemize}
\item[i)] $\tau^i$ are i.i.d. random variables whose distribution is  
given below;
\item[ii)] $\mx = 0 \in \mE$ implies
\begin{align*}
\mathbb{P}^{\mx}(\tau^1 >t \,|\, \cY_{\tau^1} \in \mE \setminus \{\mx\}) = \mathbb{E}^0 [\exp (- \mu L_t) ], \quad t>0;
\end{align*}
\item[iii)] $\mathbb{E}[\tau^1] <\infty$ iff $\mu = \frac{b}{c} < \infty$ and
\begin{align*}
\lim_{\lambda \to 0} \frac{\Phi(\lambda)}{\lambda} < \infty;
\end{align*}
\item[iv)] $\mathbb{E}[\tau^1]<\infty$ and $t \mapsto u(t, \cdot)$ in $W^{1,1}(0, \infty)$ imply that $t \mapsto \mathfrak{D}^\Phi_t u(t, \cdot)$ is in $L^1(0, \infty)$;
\item[v)] $\mathbb{E}[\tau^1] >0$ and $t \mapsto \dot{u}(t, \cdot)$ is bounded ($u \in D_L$) imply that $t \mapsto \mathfrak{D}^\Phi_t u(t, \cdot)$ is bounded (uniformly bounded).
\end{itemize}
\end{theorem}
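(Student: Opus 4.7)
The plan is to reduce the entire theorem to the one-dimensional analysis already carried out in Remark~\ref{remark:holdingT} by exploiting the radial decomposition of $\cY$. Let $\widetilde\tau^{\,i}$ denote the holding times at $0$ of the non-subordinated sticky Brownian motion $\cX = \cZ \circ V^{-1}$ on $\mE$, where $V(t) = t + \mu\,\ell^\cZ(t)$; by Remark~\ref{remark:holdingT} applied through the radial component, these are i.i.d.\ $\mathrm{Exp}(\mu)$. Because $V_H$ and $V$ differ only in that $H$ stretches the sticky parts (and $H \perp \cZ$), the holding times of $\cY$ inherit the composition identity $\tau^i = H(\widetilde\tau^{\,i})$ from the one-dimensional case treated in the Remark. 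With this identification, (i) follows from the independent and stationary increments of $H$: the consecutive $H$-increments over the i.i.d.\ $\widetilde\tau^{\,i}$ are themselves i.i.d., exactly via the telescoping argument $H_{\widetilde\tau^{\,1}+\widetilde\tau^{\,2}} - H_{\widetilde\tau^{\,1}} \perp H_{\widetilde\tau^{\,1}} - H_{0}$ noted in Remark~\ref{remark:holdingT}. For (ii), since $L$ is built from $H$ and hence independent of $\widetilde\tau^{\,1} \sim \mathrm{Exp}(\mu)$, a direct computation yields
\begin{align*}
\mathbb{P}^{0}(\tau^1 > t \mid \cY_{\tau^1} \neq 0) = \mathbb{P}(H(\widetilde\tau^{\,1}) > t) = \mathbb{P}(\widetilde\tau^{\,1} > L_t) = \mathbb{E}^{0}[e^{-\mu L_t}],
\end{align*}
which is \eqref{holdingY} transferred to the star graph.

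For (iii), I would condition on $\widetilde\tau^{\,1}$ and apply the standard subordinator identity $\mathbb{E}[H_s] = s\,\Phi'(0+)$ together with $\Phi'(0+) = \lim_{\lambda\downarrow 0}\Phi(\lambda)/\lambda$ (which follows from $\Phi(0)=0$, either by L'H\^opital or directly from the L\'evy--Khintchine representation of $\Phi$) to obtain $\mathbb{E}[\tau^1] = \Phi'(0+)/\mu$, whence the stated equivalence is immediate. For (iv), note that $\mathfrak{D}^\Phi_t u$ is the half-line convolution $\dot u * \bar\phi$, so Young's inequality gives
\begin{align*}
\|\mathfrak{D}^\Phi u(\cdot, \mx)\|_{L^1(0,\infty)} \leq \|\dot u(\cdot, \mx)\|_{L^1(0,\infty)}\,\|\bar\phi\|_{L^1(0,\infty)};
\end{align*}
letting $\lambda \downarrow 0$ in \eqref{tailLap} identifies $\|\bar\phi\|_{L^1} = \Phi'(0+)$, which is finite by (iii), while the first factor is finite by the $W^{1,1}$ hypothesis on $u$.

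For (v), I would feed the $D_L$-bound $|\dot u(s,\cdot)| \leq M_0\,\kappa(ds)/ds$ into the convolution defining $\mathfrak{D}^\Phi$ and invoke the Sonine identity $\int_0^t \bar\phi(t-s)\,\kappa(ds) = 1$ to conclude $|\mathfrak{D}^\Phi_t u(t,\cdot)| \leq M_0$ uniformly in $t$, as already indicated in \eqref{unifBoundD}; the hypothesis $\mathbb{E}[\tau^1] > 0$ serves only to rule out the degenerate case where stickiness collapses and the dynamic boundary condition becomes vacuous. The principal obstacle is the rigorous justification of the identification $\tau^i = H(\widetilde\tau^{\,i})$ on the star graph: one must match the plateaux of $V_H^{-1}$ during which $\cY$ sits at $0$ with the consecutive $H$-increments along the range of $\mu\,\ell^\cZ$ restricted to the zero set of $\cZ$, and verify that the resulting reparametrization preserves the strong Markov property at returns to the vertex. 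Once this bookkeeping is secured, the probabilistic content is entirely supplied by Remark~\ref{remark:holdingT} and the remaining steps are essentially mechanical.
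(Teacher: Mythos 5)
Your proposal is correct and takes essentially the same route as the paper: reduction to the radial, one-dimensional picture of Remark~\ref{remark:holdingT}, the identification $\tau^i = H\circ\widetilde\tau^{\,i}$ with i.i.d.-ness coming from the subordinator's independent increments, the identity $\mathbb{P}(H_{\widetilde\tau^{\,1}}>t)=\mathbb{P}(\widetilde\tau^{\,1}>L_t)=\mathbb{E}^0[e^{-\mu L_t}]$ for (ii), and for (iv)--(v) the $L^1$/uniform bounds via $\lim_{\lambda\to 0}\Phi(\lambda)/\lambda$ and the Sonine identity, exactly as in the paper. The only (harmless) deviation is in (iii), where you obtain $\mathbb{E}[\tau^1]=\Phi'(0+)/\mu$ directly from $\mathbb{E}[H_s]=s\,\Phi'(0+)$, whereas the paper lets $\lambda\to 0$ in $\int_0^\infty e^{-\lambda t}\,\mathbb{E}^0[e^{-\mu L_t}]\,dt=\frac{\Phi(\lambda)}{\lambda}\,\frac{1}{\mu+\Phi(\lambda)}$; both yield the same finiteness criterion.
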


\begin{proof}
The holding time for $\cY$ on the vertex $0 \in \mE$ is given by the holding time of $Y$ at zero. For the process $Y$ we have introduced the sequence $\{\tau_Y^i, \; i \in \mathbb{N}\}$ of holding times for which (see Remark \ref{remark:holdingT})
\begin{align*}
\mathbb{P}^0(\tau_Y^i > t | Y_{\tau_Y^i} >0) = \mathbb{P}^0(\tau^i > L_t | Y_{\tau_Y^i} >0).
\end{align*}
Since $Y$ moves along the path of $X$, we have the equivalence $(Y_{\tau_Y^i} >0) \equiv (X_{\tau^i_X} >0)$ where, here, $\{\tau^i_X\}_i$ is the sequence of holding times for $X$ introduced in Remark \eqref{remark:holdingT}. Thus, we get formula \eqref{holdingY},
\begin{align}
\mathbb{P}^0(\tau^i > L_t | Y_{\tau_Y^i} >0) = \mathbb{E}^0[\exp(-\mu L_t)], \quad \forall\, i.
\end{align}
On each edge $\mathbf{e}_j \in \mathcal{E}$, we can therefore write
\begin{align*}
\mathbb{P}^0(\tau^i > t\, |\, \cY_{\tau^i} \in \mathbf{e}_j \setminus \{0\}) = \mathbb{P}^0(\tau_Y^i > t | Y_{\tau_Y^i} >0) = \mathbb{E}^0[\exp(-\mu L_t)], \quad j=1,2, \ldots, n, \quad \forall\, i.
\end{align*}
In particular, 
\begin{align*}
\mathbb{P}^0(\tau^i > t\, |\, \cY_{\tau^i} \in \mE \setminus \{0\}) = \sum_{j=1}^n p_j\, \mathbb{E}^0[\exp(- \mu L_t)] = \mathbb{E}^0[\exp(- \mu L_t)], \quad \forall\, i 
\end{align*}
and we get the claim.

The point {\it iii)}  can be proved by observing that
\begin{align*}
\int_0^\infty e^{-\lambda t} \, \mathbb{E}^0[\exp (- \mu L_t)] \, dt = \frac{\Phi(\lambda)}{\lambda} \frac{1}{\mu + \Phi(\lambda)}, \quad \lambda>0.
\end{align*}
As $\lambda \to 0$ we get $\mathbb{E}[\tau]$. Since $\Phi(0)=0$, we only need to check for the limit of $\Phi(\lambda)/\lambda$ as $\lambda \to 0$. 

For the point {\it iv)} we first notice that
\begin{align*}
\int_0^\infty e^{-\lambda t} \, \mathfrak{D}^\Phi_tu(t, \mx)  \, dt = \left( \int_0^\infty e^{-\lambda t} \, \frac{\partial u}{\partial t}(t, \mx) \, dt \right) \left( \int_0^\infty e^{-\lambda t} \, \bar{\phi}(t) \, dt \right)
\end{align*}
and
\begin{align*}
|\mathfrak{D}^\Phi_tu(t, \mx) | \leq \int_0^t \bigg| \frac{\partial u}{\partial s}(s, \mx) \bigg| \, \bar{\phi}(s) \, ds, \quad t>0,\, \mx \in \mE.
\end{align*}
Thus, we get, for $\mx \in \mE$,
\begin{align*}
\int_0^\infty | \mathfrak{D}^\Phi_tu(t, \mx) | dt \leq  \left( \int_0^\infty \bigg| \frac{\partial u}{\partial t} (t,\mx) \bigg| dt \right) \left( \lim_{\lambda \to 0} \int_0^\infty e^{-\lambda t} \bar{\phi}(t)dt \right) = \bigg\|\frac{\partial u}{\partial t}(\cdot, \mx) \bigg\|_{L^1(0, \infty)} \left( \lim_{\lambda \to 0} \frac{\Phi(\lambda)}{\lambda} \right). 
\end{align*}
Since $u$ solves the heat equation on $\mE \setminus \{0\}$ and $Au \in C(\mE)$ we write
\begin{align*}
\| \mathfrak{D}^\Phi_tu(\cdot, 0) \|_{L^1(0, \infty)} \leq \bigg\| \frac{\partial u}{\partial t}(\cdot , 0) \bigg\|_{L^1(0, \infty)} \left( \lim_{\lambda \to 0} \frac{\Phi(\lambda)}{\lambda} \right).
\end{align*}
Assume that $\Phi(\lambda) /\lambda$ is finite as $\lambda \to 0$. We conclude that $u(\cdot, 0) \in W^{1,1}(0, \infty)$ implies $\mathfrak{D}^\Phi_t u(\cdot , 0) \in L^1(0, \infty)$. 

Point {\it v)} basically says that we have no restriction on the symbol $\Phi$. Indeed, $\forall\, \Phi$, that is for $\mathbb{E}[\tau]>0$, Theorem \ref{thm:NLBVP} and formula \eqref{unifBoundD} hold true. This is the case $u \in D_L$. In case $t \mapsto \dot{u}(t, \cdot)$ is bounded (for example of exponential order $w>0$, $|\dot{u}| \leq M e^{wt}$ ) we simply get, at $\mx=0$ for instance, 
\begin{align*}
|\mathfrak{D}^\Phi_t u(t, 0)| \leq M \int_0^t e^{ws} \bar{\phi}(t-s)ds < \infty.
\end{align*}
\end{proof}

%

\begin{remark}
Let us consider $\Phi(\lambda) = a\ln (1+\lambda/b)$. We remark that
\begin{align*}
\lim_{\lambda \to 0} \frac{\Phi(\lambda)}{\lambda} = \frac{a}{b}<\infty.
\end{align*}
Thus, the mean holding time is finite in case of Gamma subordinators with $a,b \in (0, \infty)$. 
\end{remark}

\begin{remark}
Observe that $t \mapsto u(t, \cdot) \in W^{1,1}(0, \infty)$ implies $t \mapsto u(t, \cdot) \in L^\infty(0, \infty)$. Thus, in point {\it iv)} of Theorem \ref{thm:HoldingTime} we are still working with bounded functions.
\end{remark}


\section*{Acknowledgments}
The first author would like to thank the group INdAM-GNAMPA for the kind support. The second author would like to thank Sapienza (Ricerca Scientifica 2020) and the group INdAM-GNAMPA for the grants supporting this research.


%

\begin{thebibliography}{10}

\bibitem{Bertoin1999}
J.~Bertoin.
\newblock {\em Subordinators: Examples and Applications - Lectures on
  Probability Theory and Statistics: Ecole d'Et{\'e} de Probailit{\'e}s de
  Saint-Flour XXVII - 1997}, pages 1--91.
\newblock Springer Berlin Heidelberg, Berlin, Heidelberg, 1999.

\bibitem{BRHC2020}
N.~Bou-Rabee and M.~C. Holmes-Cerfon.
\newblock Sticky {B}rownian motion and its numerical solution.
\newblock {\em SIAM Rev.}, 62(1):164--195, 2020.

\bibitem{Bou92}
J.~P. Bouchaud.
\newblock Weak ergodicity breaking and aging in disordered systems.
\newblock {\em J. Phys. I (France)}, 2:1705--1713, 1992.

\bibitem{CapDovALEA}
R.~Capitanelli and M.~D'Ovidio.
\newblock Delayed and rushed motions through time change.
\newblock {\em ALEA, Lat. Am. J. Probab. Math. Stat.}, 17:183--204, 2020.

\bibitem{caputoBook}
M.~Caputo.
\newblock {\em Elasticit\`{a} e Dissipazione}.
\newblock Zanichelli, Bologna, 1969.

\bibitem{CapMai71}
M.~Caputo and F.~Mainardi.
\newblock Linear models of dissipation in anelastic solids.
\newblock {\em La Rivista del Nuovo Cimento}, 1:161--198, 1971.

\bibitem{CapMai71b}
M.~Caputo and F.~Mainardi.
\newblock A new dissipation model based on memory mechanism.
\newblock {\em PAGEOPH}, 91:134--147, 1971.

\bibitem{Chen17}
Z.-Q. Chen.
\newblock Time fractional equations and probabilistic representation.
\newblock {\em Chaos, Solitons \& Fractals}, 102:168--174, 2017.

\bibitem{Chen2017}
Z.-Q. Chen.
\newblock Time fractional equations and probabilistic representation.
\newblock {\em Chaos Solitons Fractals}, 102:168--174, 2017.

\bibitem{Chung}
K.~L. Chung.
\newblock {\em Green, {B}rown, and probability \& {B}rownian motion on the
  line}.
\newblock World Scientific Publishing Co., Inc., River Edge, NJ, 2002.

\bibitem{Colantoni2022}
F.~Colantoni and M.~D'Ovidio.
\newblock Non-local boundary value problems for brownian motions,
  arXiv:2209.14135, 2022.

\bibitem{HollWeis}
F.~den Hollander and G.~H. Weiss.
\newblock {\em 4. Aspects of Trapping in Transport Processes - Contemporary
  Problems in Statistical Physics}, pages 147--203.

\bibitem{Diethelm2004}
K.~Diethelm.
\newblock {\em The analysis of fractional differential equations}, volume 2004
  of {\em Lecture Notes in Mathematics}.
\newblock Springer-Verlag, Berlin, 2010.
\newblock An application-oriented exposition using differential operators of
  Caputo type.

\bibitem{FBVP1}
M.~D'Ovidio.
\newblock Fractional boundary value problems.
\newblock {\em Fract. Calc. Appl. Anal.}, 25(1):29--59, 2022.

\bibitem{FBVP2}
M.~D'Ovidio.
\newblock Fractional boundary value problems and elastic sticky brownian
  motions, arXiv: 2205.04162, 2023.

\bibitem{Dzh66}
M.~Dzherbashian.
\newblock Integral transforms and representations of functions in the complex
  plane (in russian).
\newblock {\em Nauka, Moscow}, 1966.

\bibitem{DzhNers68}
M.~Dzherbashian and A.~Nersessian.
\newblock Fractional derivatives and the cauchy problem for differential
  equations of fractional order (in russian).
\newblock {\em Izv. Akad. Nauk Armjan. SSR. Ser. Mat}, 3:1--29, 1968.

\bibitem{Feller1952}
W.~Feller.
\newblock The parabolic differential equations and the associated semi-groups
  of transformations.
\newblock {\em Ann. of Math. (2)}, 55:468--519, 1952.

\bibitem{Feller1954a}
W.~Feller.
\newblock Diffusion processes in one dimension.
\newblock {\em Trans. Amer. Math. Soc.}, 77:1--31, 1954.

\bibitem{Feller1954b}
W.~Feller.
\newblock The general diffusion operator and positivity preserving semi-groups
  in one dimension.
\newblock {\em Ann. of Math. (2)}, 60:417--436, 1954.

\bibitem{Fitzsimmons2015}
P.~J. Fitzsimmons and K.~E. Kuter.
\newblock Harmonic functions of {B}rownian motions on metric graphs.
\newblock {\em J. Math. Phys.}, 56(1):013504, 28, 2015.

\bibitem{Freidlin2000}
M.~Freidlin and S.-J. Sheu.
\newblock Diffusion processes on graphs: stochastic differential equations,
  large deviation principle.
\newblock {\em Probab. Theory Related Fields}, 116(2):181--220, 2000.

\bibitem{Freidlin1993}
M.~I. Freidlin and A.~D. Wentzell.
\newblock Diffusion processes on graphs and the averaging principle.
\newblock {\em Ann. Probab.}, 21(4):2215--2245, 1993.

\bibitem{Ikeda1981}
N.~Ikeda and S.~Watanabe.
\newblock {\em Stochastic differential equations and diffusion processes},
  volume~24 of {\em North-Holland Mathematical Library}.
\newblock North-Holland Publishing Co., Amsterdam-New York; Kodansha, Ltd.,
  Tokyo, 1981.

\bibitem{Ito1963}
K.~It\^{o} and H.~P. McKean, Jr.
\newblock Brownian motions on a half line.
\newblock {\em Illinois J. Math.}, 7:181--231, 1963.

\bibitem{Ito1974}
K.~It\^{o} and H.~P. McKean, Jr.
\newblock {\em Diffusion processes and their sample paths}.
\newblock Die Grundlehren der mathematischen Wissenschaften, Band 125.
  Springer-Verlag, Berlin-New York, 1974.
\newblock Second printing, corrected.

\bibitem{Kallenberg2002}
O.~Kallenberg.
\newblock {\em Foundations of modern probability}.
\newblock Probability and its Applications (New York). Springer-Verlag, New
  York, second edition, 2002.

\bibitem{Koc2011}
A.~N. Kochubei.
\newblock General fractional calculus, evolution equations, and renewal
  processes.
\newblock {\em Integral Equations Operator Theory}, 71(4):583--600, 2011.

\bibitem{Kolo19}
V.~N. Kolokoltsov.
\newblock The probabilistic point of view on the generalized fractional partial
  differential equations.
\newblock {\em Fract. Calc. Appl. Anal.}, 22:543--600, 2019.

\bibitem{Kostrykin2012a}
V.~Kostrykin, J.~Potthoff, and R.~Schrader.
\newblock Construction of the paths of {B}rownian motions on star graphs {I}.
\newblock {\em Commun. Stoch. Anal.}, 6(2):223--245, 2012.

\bibitem{Kostrykin2012b}
V.~Kostrykin, J.~Potthoff, and R.~Schrader.
\newblock Construction of the paths of {B}rownian motions on star graphs {II}.
\newblock {\em Commun. Stoch. Anal.}, 6(2):247--261, 2012.

\bibitem{Mugnolo2019}
D.~Mugnolo.
\newblock What is actually a metric graph?, arXiv: 1912.07549, 2019.

\bibitem{Podlubny1999}
I.~Podlubny.
\newblock {\em Fractional differential equations}, volume 198 of {\em
  Mathematics in Science and Engineering}.
\newblock Academic Press, Inc., San Diego, CA, 1999.
\newblock An introduction to fractional derivatives, fractional differential
  equations, to methods of their solution and some of their applications.

\bibitem{Revuz1999}
D.~Revuz and M.~Yor.
\newblock {\em Continuous martingales and {B}rownian motion}, volume 293 of
  {\em Grundlehren der mathematischen Wissenschaften [Fundamental Principles of
  Mathematical Sciences]}.
\newblock Springer-Verlag, Berlin, third edition, 1999.

\bibitem{Toaldo2015}
B.~Toaldo.
\newblock Convolution-type derivatives, hitting-times of subordinators and
  time-changed {$C_0$}-semigroups.
\newblock {\em Potential Anal.}, 42(1):115--140, 2015.

\end{thebibliography}

\end{document}